\newtheorem{Theorem}{Theorem}[section]
\newtheorem{Remark}{Remark}[section]
\newtheorem{Lemma}{Lemma}[section]
\newtheorem{Corollary}{Corollary}[section]
\renewcommand\@biblabel[1]{#1.}
\begin{document}

\title{\bf Lipschitz Continuity of Convex Functions}
\author{Bao Tran Nguyen\thanks{Universidad de O'Higgins, Rancagua, Chile and Quy Nhon University, Quy Nhon, Vietnam. E-mails: baotran.nguyen@uoh.cl; nguyenbaotran31@gmail.com} \and Pham  Duy Khanh\footnote{Department of Mathematics, HCMC University of Education, Ho Chi Minh, Vietnam and Center for Mathematical
		Modeling, Universidad de Chile, Santiago, Chile. E-mails:
		pdkhanh182@gmail.com; pdkhanh@dim.uchile.cl}}
\maketitle

\medskip
\begin{quote}
\noindent {\bf Abstract}  We provide some necessary and sufficient conditions for a proper lower semicontinuous convex function, defined on a real Banach space, to be locally or globally Lipschitz continuous. Our criteria rely on the existence of a bounded selection of the subdifferential mapping 
and the intersections of the subdifferential mapping and the normal cone operator to the domain of the given function. Moreover, we also point out that the Lipschitz continuity of the given function on an open and bounded (not necessarily convex) set can be characterized via the existence of a bounded selection  of the subdifferential mapping
 on the boundary of the given set and as a consequence it is equivalent to the local Lipschitz continuity at every point on the boundary of that set.  
 Our results are applied to extend a Lipschitz  and convex function to the whole space and to study the Lipschitz continuity of its Moreau envelope functions.
 
\medskip
\noindent {\bf Keywords} Convex function, Lipschitz continuity, Calmness, Subdifferential, Normal cone, Moreau envelope function.

\medskip
\noindent {\bf Mathematics Subject Classification (2010)} 26A16, 46N10, 52A41
\end{quote}

\section{Introduction}
Lipschitz continuous and convex functions play a significant role in convex and nonsmooth analysis.
It is well-known that if the domain of a  proper lower semicontinuous convex function defined on a real Banach space has a nonempty interior then the function is continuous over the interior of its domain \cite[Proposition~2.111]{BS00} and as a consequence, it is subdifferentiable (its subdifferential is a nonempty set) and locally Lipschitz continuous at every point in the interior of its domain \cite[Proposition~2.107]{BS00}. Moreover, by the Hahn-Banach principle, at every interior point of the domain, the subdifferential of the given function is bounded by the corresponding Lipschitz constant.
Howerver, on the boundary of the domain, the subdifferential may be an empty set or unbounded one, and so the function may be not locally Lipschitz continuous. 

\medskip
For a convex function, the nonemptiness of its subdifferential at some point in its domain is equivalent to its calmness at that point \cite[Proposition~3.29]{Penot13}. Calmness is a property like local Lipschitz continuity but it involves comparisons only between a fixed point and nearby points, not between all possible pairs of points in some neighborhood of that fixed point. Therefore, for the calmness of a convex function, we can deal with boundary points of its domain. In light of this view, 
in this paper, we consider the notions of local and global Lipschitz continuity for a function with respect to its domain. Here, our points of Lipschitz continuity may not be in the interior of the domain of the given function. It turns out that local Lipschitz continuity is equivalent to the calmness on the corresponding neighborhood with the same modulus. This relationship helps us to characterize the local Lipschitz continuity of a proper convex function by the boundedness of a selection of its subdifferential. Moreover, by using the mean value theorem and the extension of a Lipschitz convex function to the whole space, we could establish some new characterizations for the local Lipschitz continuity for a lower semicontinuous convex function in terms of the intersections of its subdifferential and the normal cone to its domain. As a by product, some necessary and sufficient conditions for the global Lipschitz continuity are also deduced from the criteria for the local case. 

\medskip
Since the subdifferential of a proper lower semicontinuous convex function is nonempty on the interior of its domain, its Lipschitz continuity over an open set is equivalent to the boundedness of its subdifferential on that set. If the open set is bounded then its boundary is a nonempty set. It is interesting if we could get the Lipschitz continuity of the given function on an open bounded set from its information on the boundary that set.  
In this paper, by using the maximal monotonicity of the subdifferential operators, we can deduce the necessary and sufficient conditions for the Lipschitz continuity of a lower semicontinuous convex function
on an open bounded set from the boundedness of some selection of its subdifferential on the boundary of that set. 
This characterization allows us to show that the Lipschitz continuity on an open bounded set is equivalent to local Lipschitz continuity on the boundary of that set. We also use this characterization to estimate the values of the function on an open bounded set from the diameter of the given set and to deduce a criterion for the global Lipschitz continuity from the boundedness of the distance function from the origin to the subdifferential operator at infinity. 

\medskip
Our obtained results are used to study two classical problems in convex analysis: the extension of a Lipschitz and convex function to the whole space and the justification of Lipschitz continuity of its Moreau envelope functions.
For the first problem, Hiriart-Urruty \cite{Hiriat80} gave an explicit formula for the extension  by performing the infimal convolution of two functions associated with the data of the problem. We propose
here an alternative extension obtained by taking the supremum of all possible linear approximations of the given function. This extension is geometric and the subdifferential of the extending function is the biggest among all other extending ones. For the second problem, we show that the Lipschitz continuity of the 
 Moreau envelope functions can be inherited from the Lipschitz continuity of the original function. 
 It is interesting that all the regularized functions share the same Lipschitz constant of the original function. 
 
 \medskip
 The rest of the paper is structured as follows. Sect. 2 recalls some basic notations and preliminary results in convex analysis. Characterizations for the local and global Lipschitz continuity of a proper lower semicontinuous convex function are presented in Sect. 3. In Section. 4, necessary and sufficient conditions for the Lipschitz continuity on an open bounded set are investigated. The last section applies the obtained characterizations to the extension of a Lipschitz convex function and to establish the Lipschitz continuity of its Moreau envelope functions.

\section{Preliminaries} 
Let $X$ be a real Banach space with norm $\|\cdot\|$ and $X^*$ its continuous dual.  The value of a functional $x^*\in X^*$
at $x\in X$ is denoted by $\langle x^*,x\rangle$. The closed unit balls on $X$ and $X^*$ are denoted, respectively,  by 
$\mathbb{B}$ and $\mathbb{B}^*$.  For every $x\in X$ and $r>0$ the open ball with center $x$ and radius $r$ is given by
$$
B(x;r):=\{y\in X: \|x-y\|<r\}.
$$
The (\textit{effective}) \textit{domain} $\operatorname{dom} f$ of an extended real-valued function $f:X\rightarrow\overline{\mathbb{R}}:=\mathbb{R}\cup\{+\infty\}$ is the set of points $x$ where $f(x)\in\mathbb{R}$. The epigraph of $f$ is  defined by
$$
\operatorname{epi}f:=\{(x,r)\in X\times\mathbb{R}: f(x)\leq r\}.
$$
Recall that $f$ is \textit{proper} if its effective domain is nonempty and \textit{convex} if $\operatorname{epi}f$ is convex in $X\times\mathbb{R}$. Clearly, if $f$ is a proper convex function then $\operatorname{dom}f$ is a nonempty convex set. 
It is said that $f$ is \textit{lower semicontinuous} (\textit{l.s.c.}) if $\operatorname{epi}f$ is closed in $X\times\mathbb{R}$.   
Given a nonempty set $S$ of $X$ and a real number $\ell\geq 0$, $f$ is said to be \textit{Lipschitz continuous}  on $S$ with modulus $\ell$ or $\ell-$\textit{Lipschitz} on $S$
if $f$ is finite on $S$ and if
$$
	|f(x)-f(y)|\leq\ell\|x-y\|\quad \text{for all}\;\; x,y\;\;\text{in}\;\; S.
$$
The function $f$ is said to be \textit{locally Lipschitz} at $\bar{x}\in\operatorname{dom}f$ with the modulus $\ell\geq 0$ if there exists $r>0$ such that $f$ is $\ell-$Lipschitz on $\operatorname{dom}f\cap B(\bar{x};r)$. Local Lipschitz continuity is related to the concept of calmness. Recall that $f$ is \textit{calm  at} $\bar{x}\in \operatorname{dom}f$ with the modulus $\ell\geq 0$ 
(see \cite[p. 200]{Penot13} or \cite[p. 322]{RockafellarWets98})  if there is a neighborhood $V$ of $\bar{x}$ such that 
$$
f(x)\geq f(\bar{x})-\ell\|x-\bar{x}\|\quad \text{for all}\;\; x\in V.
$$
Clearly, if $f$ is locally Lipschitz continuous at $\bar{x}$ then $f$ is calm at $\bar{x}$ while the reverse is not true. 

\medskip
Lipschitz continuity of a function can be extended on the whole space with the same modulus (see  \cite[Theorem~1]{Hiriat80}).
\begin{Theorem}\label{ExstensionClassical}
	Let $f:X\rightarrow\overline{\mathbb{R}}$ be a $\ell-$\textit{Lipschitz} on a nonempty set $S\subset X$. 
	Consider the function $f_{S,\ell}:X\rightarrow\mathbb{R}$ given by
	$$
	f_{S,\ell}(x):=\inf_{u\in S}\{f(u)+\ell\|x-u\|\}\quad \text{for all}\quad x\in X.
	$$
	Then, $f_{S,\ell}$ is $\ell-$\textit{Lipschitz} on $X$ and coincides with $f$ on $S$. Moreover, if $S$ is a  convex set  and $f$ is convex then $f_{S,\ell}$ is convex. 
\end{Theorem}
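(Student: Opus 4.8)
The plan is to establish the four assertions one at a time, each by a short direct estimate. First I would check that $f_{S,\ell}$ is finite-valued. The upper bound is immediate: fixing any $u_0\in S$ and taking $u=u_0$ in the infimum gives $f_{S,\ell}(x)\le f(u_0)+\ell\|x-u_0\|<+\infty$ for every $x\in X$. For the lower bound I would use the Lipschitz inequality on $S$, which yields $f(u)\ge f(u_0)-\ell\|u-u_0\|$ for every $u\in S$; combining this with the triangle inequality $\|u-u_0\|\le\|u-x\|+\|x-u_0\|$ gives
$$
f(u)+\ell\|x-u\|\ \ge\ f(u_0)-\ell\|x-u_0\| \qquad \text{for all } u\in S,
$$
so $f_{S,\ell}(x)\ge f(u_0)-\ell\|x-u_0\|>-\infty$. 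Hence $f_{S,\ell}(x)\in\mathbb{R}$ for all $x$, i.e.\ $f_{S,\ell}:X\to\mathbb{R}$ is well defined.

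Next, for the Lipschitz property on $X$, I would fix $x,y\in X$ and, for an arbitrary $u\in S$, apply the triangle inequality $\|x-u\|\le\|x-y\|+\|y-u\|$ to obtain $f(u)+\ell\|x-u\|\le\bigl(f(u)+\ell\|y-u\|\bigr)+\ell\|x-y\|$. Taking the infimum over $u\in S$ on both sides gives $f_{S,\ell}(x)\le f_{S,\ell}(y)+\ell\|x-y\|$, and interchanging the roles of $x$ and $y$ yields $|f_{S,\ell}(x)-f_{S,\ell}(y)|\le\ell\|x-y\|$. For the coincidence with $f$ on $S$, I would take $x\in S$: choosing $u=x$ in the infimum gives $f_{S,\ell}(x)\le f(x)$, whereas for every $u\in S$ the $\ell$-Lipschitz property gives $f(x)-f(u)\le\ell\|x-u\|$, that is $f(u)+\ell\|x-u\|\ge f(x)$, and taking the infimum over $u$ produces $f_{S,\ell}(x)\ge f(x)$; hence $f_{S,\ell}=f$ on $S$.

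For the last claim, assume $S$ convex and $f$ convex. I would argue directly with an $\varepsilon$-argument: given $x_1,x_2\in X$ and $\lambda\in(0,1)$, for each $\varepsilon>0$ pick $u_1,u_2\in S$ with $f(u_i)+\ell\|x_i-u_i\|\le f_{S,\ell}(x_i)+\varepsilon$ for $i=1,2$. Since $S$ is convex, $u:=\lambda u_1+(1-\lambda)u_2\in S$, and using the convexity of $f$ together with the convexity of the norm,
$$
f_{S,\ell}(\lambda x_1+(1-\lambda)x_2)\ \le\ f(u)+\ell\|\lambda x_1+(1-\lambda)x_2-u\|\ \le\ \lambda\bigl(f_{S,\ell}(x_1)+\varepsilon\bigr)+(1-\lambda)\bigl(f_{S,\ell}(x_2)+\varepsilon\bigr).
$$
Letting $\varepsilon\downarrow 0$ gives the convexity of $f_{S,\ell}$. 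Alternatively, one may observe that $f_{S,\ell}$ is the infimal convolution of the two convex functions $f+\delta_S$ (where $\delta_S$ is the indicator function of $S$) and $\ell\|\cdot\|$, and is therefore convex.

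Every step above is an elementary manipulation of the triangle inequality and the defining Lipschitz estimate, so I do not anticipate any serious obstacle. The only point requiring a little care is the lower-bound estimate guaranteeing that $f_{S,\ell}$ never takes the value $-\infty$; this is precisely where the Lipschitz hypothesis on $S$ is used (mere finiteness of $f$ on $S$ would not suffice), and it is what makes the statement $f_{S,\ell}:X\to\mathbb{R}$ meaningful.
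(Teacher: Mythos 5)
Your proof is correct and complete. Note that the paper does not actually prove this theorem---it is recalled as a known result with a citation to Hiriart-Urruty \cite[Theorem~1]{Hiriat80}---and your argument is precisely the standard one from that reference: finiteness of the infimum via the Lipschitz lower bound $f(u)\ge f(u_0)-\ell\|u-u_0\|$, the triangle-inequality estimate for the global Lipschitz property, the two-sided comparison for coincidence on $S$, and the $\varepsilon$-minimizer (equivalently, infimal-convolution) argument for convexity. All steps check out, including the correctly flagged subtlety that the Lipschitz hypothesis (not mere finiteness of $f$ on $S$) is what prevents $f_{S,\ell}$ from taking the value $-\infty$.
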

Suppose now that $f$ is a proper convex function.
A functional $x^*\in X^*$ is said to be a \textit{subgradient} of $f$ at $x\in X$, if $f(x)$ is finite and 
$$
f(y)-f(x)\geq \langle x^*,y-x\rangle, \quad \forall y\in X.
$$
The collection of all subgradients of $f$ at $x$ is called the \textit{subdifferential} of $f$ at $x$, that is,
$$
\partial f(x):=\{x^*\in X^*: f(y)-f(x)\geq \langle x^*,y-x\rangle,\;\forall y\in X\}.
$$
The function $f$ is said to be \textit{subdifferentiable} at $x$ if $f(x)$ is finite and $\partial f(x)\ne\emptyset$.
The set of subdifferentiable points of $f$, denoted by $\operatorname{dom}\partial f$, is called the domain of $\partial f$. 
By \cite[Proposition~2.126~(iv)]{BS00}, if $f$ is continuous at $x\in\operatorname{dom}f$ then $x\in \operatorname{dom}\partial f$. Clearly, the mapping $\partial f$ is monotone, i.e., 
for every $x,y\in\operatorname{dom}\partial f$ we have
\begin{equation}\label{monotone}
\langle x^*-y^*,x-y\rangle\geq 0
\end{equation}
for all $x^*\in\partial f(x)$ and $y^*\in\partial f(y)$.
Moreover, by \cite[Theorem~A]{Rockafellar70}, if $f$ is l.s.c. then  $\partial f$ is maximal monotone, i.e., if $(x,x^*)\in X\times X^*$ satisfies the monotone relationship \eqref{monotone} for all $(y,y^*)\in X\times X^*$ with $y^*\in\partial f(y)$ then $x^*\in\partial f(x)$. Basic theory of maximal monotone operators
in Banach spaces is presented in \cite[Chapter~2]{Ba}.

The following theorem (see \cite[Theorem~3]{Rockafellar66}) gives a subdifferential sum rule for proper convex functions. 
\begin{Theorem} \label{SumRule}
	Let $f_1$ and $f_2$ be proper convex functions on $X$. Suppose that there exists a point at which both functions are finite and at least one is continuous. Then, for all $x\in X$,
$$
		\partial(f_1+f_2)(x)=\partial f_1(x)+\partial f_2(x).
$$
\end{Theorem}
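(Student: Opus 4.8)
The plan is to prove the two inclusions separately. The inclusion $\partial f_1(x)+\partial f_2(x)\subseteq\partial(f_1+f_2)(x)$ is the routine one: if $x_1^*\in\partial f_1(x)$ and $x_2^*\in\partial f_2(x)$ — so that both $f_1(x)$ and $f_2(x)$ are finite — then adding the two defining subgradient inequalities $f_i(y)-f_i(x)\geq\langle x_i^*,y-x\rangle$ over all $y\in X$ immediately yields $x_1^*+x_2^*\in\partial(f_1+f_2)(x)$; and if either subdifferential is empty the left-hand side is empty and there is nothing to prove. All the content is in the reverse inclusion $\partial(f_1+f_2)(x)\subseteq\partial f_1(x)+\partial f_2(x)$, which I would attack by Hahn--Banach separation in $X\times\mathbb{R}$.

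First I would reduce to the case of a minimizer: given $x^*\in\partial(f_1+f_2)(x)$, replacing $f_1$ by $f_1-\langle x^*,\cdot\rangle$ preserves properness, convexity and the continuity hypothesis while turning the desired statement into $0\in\partial f_1(x)+\partial f_2(x)$, so I may assume that $x$ minimizes $f_1+f_2$ globally (in particular $x\in\operatorname{dom}f_1\cap\operatorname{dom}f_2$) and must produce $x^*\in X^*$ with $-x^*\in\partial f_1(x)$ and $x^*\in\partial f_2(x)$. The geometric idea is to separate the epigraph $A:=\{(y,r):r\geq f_1(y)-f_1(x)\}$ of $f_1-f_1(x)$ from the hypograph $B:=\{(y,r):r\leq f_2(x)-f_2(y)\}$ of the concave function $f_2(x)-f_2$. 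These convex sets share the point $(x,0)$; using that $f_2$ is continuous at a common finite point $x_0$, the point $(x_0,s)$ with $s<f_2(x)-f_2(x_0)$ lies in $\operatorname{int}B$, so $B$ has nonempty interior, and the minimality of $x$ forces $\operatorname{int}B\cap A=\emptyset$ (a common point $(y,r)$ would satisfy $f_1(y)-f_1(x)\leq r<f_2(x)-f_2(y)$). The geometric Hahn--Banach theorem then supplies a nonzero $(x^*,\lambda)\in X^*\times\mathbb{R}$ with $\langle x^*,y\rangle+\lambda r\leq\langle x^*,x\rangle\leq\langle x^*,y'\rangle+\lambda r'$ for all $(y,r)\in B$ and $(y',r')\in A$, the constant being pinned at $\langle x^*,x\rangle$ since $(x,0)\in A\cap B$.

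The hard part will be ruling out the degenerate vertical separation $\lambda=0$. Letting $r'\to+\infty$ in the right inequality gives $\lambda\geq0$; and if $\lambda=0$, then $\langle x^*,\cdot\rangle$ is minimized over $\operatorname{dom}f_1$ and maximized over $\operatorname{dom}f_2$ at $x$, so evaluating at $x_0\in\operatorname{dom}f_1\cap\operatorname{dom}f_2$ yields $\langle x^*,x_0\rangle=\langle x^*,x\rangle$, while continuity of $f_2$ at $x_0$ gives a ball $B(x_0;\delta)\subseteq\operatorname{dom}f_2$ on which $\langle x^*,\cdot\rangle\leq\langle x^*,x_0\rangle$, forcing $x^*=0$ and contradicting $(x^*,\lambda)\neq(0,0)$. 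Hence $\lambda>0$, and after dividing through I may take $\lambda=1$; plugging $r'=f_1(y)-f_1(x)$ into the right inequality gives $-x^*\in\partial f_1(x)$, and plugging $r=f_2(x)-f_2(y)$ into the left inequality gives $x^*\in\partial f_2(x)$, so $0=(-x^*)+x^*\in\partial f_1(x)+\partial f_2(x)$, which together with the reduction proves the theorem. The one place demanding genuine care is exactly the role of the continuity hypothesis — it is what makes $\operatorname{int}B\neq\emptyset$ (so that separation applies at all) and what excludes $\lambda=0$; without it the sum rule is known to fail.
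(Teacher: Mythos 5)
Your argument is correct and complete in outline. Note that the paper does not prove this theorem at all: it is quoted as a preliminary with a citation to Rockafellar's 1966 Duke paper, where the nontrivial inclusion $\partial(f_1+f_2)(x)\subseteq\partial f_1(x)+\partial f_2(x)$ is obtained as a consequence of the Fenchel duality theorem for conjugate functions. Your proof is instead the direct geometric one (the classical Moreau--Rockafellar separation argument): tilt by $x^*$ to reduce to the case where $x$ minimizes $f_1+f_2$, separate the epigraph of $f_1-f_1(x)$ from the hypograph of $f_2(x)-f_2$ in $X\times\mathbb{R}$, and use the continuity hypothesis exactly twice --- once to give the hypograph nonempty interior so that geometric Hahn--Banach applies, and once to exclude a vertical separating hyperplane ($\lambda=0$). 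Both uses are handled correctly, as is the final step of reading off $-x^*\in\partial f_1(x)$ and $x^*\in\partial f_2(x)$ from the two sides of the separation. Two small points you should make explicit: first, you may relabel so that $f_2$ is the function continuous at $x_0$ (the statement is symmetric and the tilting reduction preserves whichever continuity hypothesis you have), since your construction of an interior point uses continuity of $f_2$ specifically; second, in the disjointness step, membership of $(y,r)$ in $A$ and in $\operatorname{int}B$ forces $f_1(y)$ and $f_2(y)$ to be finite, so the strict inequality $f_1(y)+f_2(y)<f_1(x)+f_2(x)$ is a legitimate addition of real numbers rather than of extended reals. What your route buys over the cited one is self-containedness: it requires only the geometric form of the Hahn--Banach theorem and none of the conjugacy machinery.
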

We recall the Zagrodny mean value theorem \cite[Theorem~4.3]{Zagrodny88} for proper l.s.c. convex functions.
\begin{Theorem} \label{MeanValue} Let $f: X \to \overline{\mathbb{R}}$ be a proper l.s.c. convex function. For every $a,b\in \operatorname{dom}f$ with $a\ne b$, there are sequences $x_n\rightarrow_fc\in [a,b[$ and $x_n^*\in\partial f(x_n)$ satisfying 
\begin{itemize}
	\item[{\rm (a)}] $\displaystyle f(b)-f(a)\leq\liminf_{n\rightarrow\infty}\langle x_n^*,b-a\rangle$,
	\item[{\rm (b)}] $\displaystyle\frac{\|b-c\|}{\|b-a\|}(f(b)-f(a))\leq\liminf_{n\rightarrow\infty}\langle x_n^*,b-x_n\rangle$.
\end{itemize}
\end{Theorem}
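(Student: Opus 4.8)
\quad Write $u:=b-a$, $r:=f(b)-f(a)$ and $\gamma(t):=a+tu$. The plan is to realize $c$ as a minimizer, along the segment $[a,b]$, of $f$ plus a norm penalty, and then to produce the pairs $(x_n,x_n^*)$ by Ekeland's variational principle applied to a \emph{softly} penalized version of that problem, in which every term except $f$ is continuous, so that the sum rule of Theorem~\ref{SumRule} is available. First I would introduce the continuous convex function
$$p(x):=\tfrac{\max\{r,0\}}{\|u\|}\,\|x-b\|+\tfrac{\max\{-r,0\}}{\|u\|}\,\|x-a\|,$$
which is designed so that $(f+p)(a)=(f+p)(b)$. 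Then $\theta(t):=(f+p)(\gamma(t))$ is a proper l.s.c.\ convex function on $[0,1]$ with $\theta(0)=\theta(1)$; being l.s.c.\ on a compact interval it attains its minimum, and using $\theta(0)=\theta(1)$ one checks that its set of minimizers is either contained in $(0,1)$ or equals $[0,1]$. Either way I may fix a minimizer $\tau\in(0,1)$ and set $c:=\gamma(\tau)\in{[a,b[}$.

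Heuristically the optimality of $c$ for $f+p$ on $[a,b]$ reads $0\in\partial f(c)+\partial p(c)+N_{[a,b]}(c)$. Since $\partial\|\cdot-b\|(c)$ (resp.\ $\partial\|\cdot-a\|(c)$) consists of norm-one functionals pairing to $\mp\|u\|$ against $u$, while $N_{[a,b]}(c)\subseteq u^{\perp}$ because $c$ lies in the relative interior of $[a,b]$, this would deliver a \emph{single} $x^*\in\partial f(c)$ with $\langle x^*,u\rangle=r$ and $\langle x^*,b-c\rangle=\tfrac{\|b-c\|}{\|u\|}\,r$, i.e.\ (a)--(b) with equalities. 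The obstruction — and the reason sequences appear in the statement — is that the subdifferential sum rule is not legitimate for $\iota_{[a,b]}$, since $[a,b]$ has empty interior and $f$ may be nowhere continuous on it, so the single inclusion above cannot be split.

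To get around this I would fix a continuous affine minorant $\langle x_0^*,\cdot\rangle+\beta\le f$ and, for $M>\|x_0^*\|$, use $\Psi_M:=f+p+M\,\mathrm{dist}(\cdot,[a,b])$, which is proper l.s.c.\ convex and coercive, hence bounded below; since $\Psi_M$ coincides with $f+p$ on $[a,b]$, a standard lower-semicontinuity/compactness argument gives $\inf_X\Psi_M\uparrow\min\theta$ as $M\to\infty$. Choose $M_n\to\infty$, then $\varepsilon_n:=\max\{\min\theta-\inf_X\Psi_{M_n},1/n\}\to0$ (so $c$ is an $\varepsilon_n$-minimizer of $\Psi_{M_n}$), then $\lambda_n:=\sqrt{\varepsilon_n}\to0$. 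Ekeland's principle applied to $\Psi_{M_n}$ from the starting point $c$ yields $x_n$ with $\|x_n-c\|\le\varepsilon_n/\lambda_n=\sqrt{\varepsilon_n}\to0$, with $\Psi_{M_n}(x_n)\le\Psi_{M_n}(c)=\min\theta$, and such that $x_n$ minimizes the convex function $\Psi_{M_n}+\lambda_n\|\cdot-x_n\|$ on $X$. Applying Theorem~\ref{SumRule} (twice) to this minimization, all of whose summands except $f$ are continuous, gives
$$0=x_n^*+p_n^*+M_n\sigma_n^*+\lambda_n e_n^*,\quad x_n^*\in\partial f(x_n),\ p_n^*\in\partial p(x_n),\ \sigma_n^*\in\partial\,\mathrm{dist}(\cdot,[a,b])(x_n),\ \|e_n^*\|\le1.$$

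Finally I would pass to the limit. From $\Psi_{M_n}(x_n)\le\min\theta$, the bound $\Psi_{M_n}(x_n)\ge f(x_n)+p(x_n)$, lower semicontinuity of $f$, continuity of $p$ and $x_n\to c$, one gets $f(x_n)+p(x_n)\to\min\theta$, hence $f(x_n)\to f(c)$, i.e.\ $x_n\to_f c$. Since $x_n\to c\in{]a,b[}$, for large $n$ every metric projection $\pi_n$ of $x_n$ onto $[a,b]$ lies in the relative interior, so $\sigma_n^*\in N_{[a,b]}(\pi_n)\subseteq u^{\perp}$, whence $\langle\sigma_n^*,u\rangle=0$; together with $\lambda_n e_n^*\to0$ and $\langle p_n^*,u\rangle\to-r$ (from the form of $\partial p$ at $x_n\to c$), pairing the displayed identity with $u$ gives $\langle x_n^*,u\rangle\to r$, which is (a). For (b) one uses only the \emph{sign} of the penalty term: $b\in[a,b]$ and convexity of $\mathrm{dist}(\cdot,[a,b])$ force $\langle\sigma_n^*,b-x_n\rangle\le-\mathrm{dist}(x_n,[a,b])\le0$, so pairing with $b-x_n$ yields $\langle x_n^*,b-x_n\rangle\ge-\langle p_n^*,b-x_n\rangle-\lambda_n\langle e_n^*,b-x_n\rangle\to\tfrac{\|b-c\|}{\|u\|}r$, which is (b). The hard part is exactly the failure of exact subdifferential calculus for discontinuous convex $f$, which forces one to trade the clean inclusion $0\in\partial f(c)+\partial p(c)+N_{[a,b]}(c)$ for the Ekeland construction; the two facts that make the construction close without any control on the rate of $x_n\to c$ are the tangential flatness of $\mathrm{dist}(\cdot,[a,b])$ along the relative interior of $[a,b]$ (giving $\langle\sigma_n^*,u\rangle=0$ exactly) and the a priori sign $\langle\sigma_n^*,b-x_n\rangle\le0$.
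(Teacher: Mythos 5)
The paper does not prove Theorem~\ref{MeanValue} at all: it is recalled as a preliminary and attributed to Zagrodny \cite[Theorem~4.3]{Zagrodny88}, so there is no internal proof to compare against. Your plan is, in substance, a correct self-contained proof of the convex case, and it follows the classical penalization-plus-Ekeland scheme that underlies Zagrodny's argument (his version handles more general subdifferentials; in the convex setting your use of Theorem~\ref{SumRule} replaces his fuzzy calculus). I checked the points where such plans usually break and they all close: $p$ is built so that $(f+p)(a)=(f+p)(b)$, so the minimizer set of $\theta$ on $[0,1]$ is indeed either all of $[0,1]$ or contained in $(0,1)$, giving $\tau\in(0,1)$ and hence $c\ne a,b$, which is exactly what makes $\langle p_n^*,u\rangle\to-r$ work (the norming functionals of $x_n-b$, resp.\ $x_n-a$, pair against $u$ with the right sign because $x_n$ stays away from the kink); the convergence $\inf_X\Psi_M\uparrow\min\theta$ is legitimate even in infinite dimensions because near-minimizing sequences are forced onto the \emph{compact} segment $[a,b]$; $\Psi_M$ is bounded below and coercive for $M>\|x_0^*\|$; and the two facts you isolate at the end, $\langle\sigma_n^*,u\rangle=0$ (via $\partial\,\mathrm{dist}(\cdot,[a,b])(x_n)\subseteq N(\pi_n;[a,b])\cap\mathbb{B}^*$ for any nearest point $\pi_n$, which lies in the relative interior for large $n$) and $\langle\sigma_n^*,b-x_n\rangle\le-\mathrm{dist}(x_n,[a,b])\le 0$, are both correct and are precisely what neutralizes the unbounded factor $M_n$. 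In a full write-up you should spell out these two subdifferential facts about the distance function and the compactness argument for $\inf_X\Psi_M\uparrow\min\theta$, but there is no gap; your construction even yields (a) with equality of the limit, which is stronger than the stated $\liminf$ inequality.
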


Given a nonempty  set $\Omega\subset X$, int$\Omega$ is the \textit{interior} of $\Omega$, $\overline{\Omega}$ is the \textit{closure} of $\Omega$ and bd($\Omega$) is the \textit{boundary} of $\Omega$ with respect to strong topology on $X$. 
Suppose now that $\Omega$ is nonempty and convex. For every $x\in \Omega$ and $\varepsilon\geq 0$, we define the set of $\varepsilon-$\textit{normals} to $\Omega$ at $x$ (see, e.g., \cite[Definition~1.1 and Proposition~1.3]{Mordukhovich06})
by
\begin{equation}\label{EpsilonNormal}
N_\varepsilon(x;\Omega):=\{x^*\in X^*:\langle x^*, y-x\rangle\leq\varepsilon\|y-x\|\;\text{whenever}\; y\in \Omega\},
\end{equation}
When $\varepsilon=0$, elements of \eqref{EpsilonNormal} are called \textit{normals} and their collection, denoted by $N(x;\Omega)$, is the \textit{normal cone} to $\Omega$ at $x$. Clearly, for every $r>0$, $\Omega\cap B(x;r)$ is convex and 
\begin{equation}\label{localnormal}
N(x;\Omega)=N(x;\Omega\cap B(x;r)).
\end{equation}
The function $\delta_\Omega: X\rightarrow\overline{\mathbb{R}}$ defined by 
$$
\delta_\Omega(x):=
\begin{cases}
0 & \text{if}\; x\in \Omega,\\
+\infty & \text{otherwise},
\end{cases}
$$
is called the \textit{indicator function} of $\Omega$. Clearly, $\delta_\Omega$ is a proper convex function and for every $x\in \Omega$ we have $\partial\delta_{\Omega}(x)=N(x;\Omega)$. Finally, by using the subdifferential sum rule for  the fucntion $f_1(x)=\delta_\Omega(x)$ and $f_2(x)=\varepsilon\|x-\bar{x}\|$ at $\bar{x}$, we get
the following relationship
\begin{equation}\label{sumnormal}
N_\varepsilon(x;\Omega)=N(x;\Omega)+\varepsilon\mathbb{B}^*.
\end{equation}
\section{Local and global Lipschitz continuity}
We start with a necessary condition and a sufficient one  for the Lipschitz continuity of a proper convex function on a nonempty subset of its domain. The following lemma is simple but it is useful in the sequel. 
\begin{Lemma}\label{CharacterizationSet} Let $f: X \to \overline{\mathbb{R}}$ be a proper convex function, $\ell\geq 0$ and $S\subset\operatorname{dom}f$ a nonempty set. If
\begin{equation}\label{Intersection}
	\partial f(x)\cap \ell\mathbb{B}^*\ne\emptyset \;\;\text{for all}\;\; x\in S
\end{equation}
then $f$ is $\ell-$Lipschitz on $S$. Conversely, if $S$ is open and $f$ is $\ell-$Lipschitz on $S$ then
\begin{equation}\label{Inclusion}
\emptyset\ne\partial f(x)\subset \ell\mathbb{B}^* \;\;\text{for all}\;\; x\in S.
\end{equation}
\end{Lemma}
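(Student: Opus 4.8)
The plan is to prove the two implications separately, using only the definition of the subdifferential.

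\textbf{First implication.} Assume \eqref{Intersection}. Fix $x, y \in S$. By hypothesis there is some $x^* \in \partial f(x) \cap \ell\mathbb{B}^*$; applying the subgradient inequality at the point $x$ with test point $y$ gives
$$
f(y) - f(x) \geq \langle x^*, y - x \rangle \geq -\|x^*\|\,\|y - x\| \geq -\ell\|y - x\|,
$$
so $f(x) - f(y) \leq \ell\|x - y\|$. Swapping the roles of $x$ and $y$ (using a subgradient at $y$ from $\partial f(y) \cap \ell\mathbb{B}^*$) yields $f(y) - f(x) \leq \ell\|x - y\|$, hence $|f(x) - f(y)| \leq \ell\|x - y\|$. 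Since $S \subset \operatorname{dom} f$, $f$ is finite on $S$, so $f$ is $\ell$-Lipschitz on $S$. This direction is essentially immediate.

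\textbf{Second implication.} Now suppose $S$ is open and $f$ is $\ell$-Lipschitz on $S$. Fix $\bar{x} \in S$; choose $r > 0$ with $B(\bar{x}; r) \subset S$. Nonemptiness of $\partial f(\bar{x})$: this is where a little care is needed, and I would invoke the continuity of $f$ on the open set $S$ together with the cited fact that a proper convex function continuous at a point is subdifferentiable there (\cite[Proposition~2.126~(iv)]{BS00}); $\ell$-Lipschitzness on $S$ certainly gives continuity at $\bar{x}$. For the inclusion $\partial f(\bar{x}) \subset \ell\mathbb{B}^*$, take any $x^* \in \partial f(\bar{x})$. For an arbitrary unit vector $h \in X$ and $t \in (0, r)$, the point $\bar{x} + th$ lies in $B(\bar{x}; r) \subset S$, so the subgradient inequality and the Lipschitz bound give
$$
t\langle x^*, h \rangle = \langle x^*, (\bar{x} + th) - \bar{x} \rangle \leq f(\bar{x} + th) - f(\bar{x}) \leq \ell\|th\| = \ell t.
$$
Dividing by $t > 0$ gives $\langle x^*, h \rangle \leq \ell$ for every unit vector $h$, hence $\|x^*\| \leq \ell$, i.e. $x^* \in \ell\mathbb{B}^*$.

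The only subtle point is establishing nonemptiness of the subdifferential on $S$ in the converse direction; everything else is a direct application of the definition of subgradient and the triangle/Cauchy–Schwarz-type bound $\langle x^*, v\rangle \leq \|x^*\|\,\|v\|$. I expect no real obstacle beyond being careful that openness of $S$ is genuinely used (to place the perturbed points $\bar{x} + th$ inside $S$) in the converse but not in the first implication. $\blacksquare$
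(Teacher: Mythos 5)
Your proof is correct and follows essentially the same route as the paper: the forward implication is the identical subgradient-inequality argument, and the converse rests on continuity plus the cited subdifferentiability result. The only difference is that you derive the bound $\partial f(\bar{x})\subset\ell\mathbb{B}^*$ explicitly via the directional estimate $\langle x^*,h\rangle\leq\ell$ for unit vectors $h$, whereas the paper simply attributes the whole of \eqref{Inclusion} to \cite[Proposition~2.126~(iv)]{BS00}; your version is slightly more self-contained but not a different argument.
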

\begin{proof} Assume that \eqref{Intersection} is satisfied. We will prove  $f$ is $\ell-$Lipschitz on $S$. 
	It follows from \eqref{Intersection} that $S\subset\operatorname{dom}\partial f\subset\operatorname{dom}f$. Let $x,y$ be two points in $S$. Again, by \eqref{Intersection}, there exists $x^*\in\partial f(x)$ such that $\|x^*\|\leq\ell$. Employing the definition of subdifferential, we get
	$$
	f(y)-f(x)\geq\langle x^*,y-x\rangle\geq-\|x^*\|\|y-x\|\geq-\ell\|y-x\|.
	$$
	Changing the role of $x$ and $y$, we immediately obtain
	$$
	|f(y)-f(x)|\leq \ell\|y-x\|.
	$$
Suppose now that $S$ is open and $f$ is $\ell-$Lipschitz on $S$. 
Clearly, $f$ is continuous on $S$ and so \eqref{Inclusion}  is satisfied by  \cite[Proposition~2.126~(iv)]{BS00}.
	$\hfill\Box$
\end{proof}

\begin{Remark} {\rm It follows from Lemma~\ref{CharacterizationSet} that if $S$ is open then $f$ is $\ell-$Lipschitz on $S$ if and only if either \eqref{Intersection} or \eqref{Inclusion}  is satisfied. 
		However, $\ell-$Lipschitz continuity of $f$ on an arbitrary set $S$ can not imply  \eqref{Intersection}. For example, for $S=\{1\}\subset\mathbb{R}$ and $f(x)=x$, $f$ is a proper convex and $0-$Lipschitz function 
		on $S$ while $\partial f(1)\cap 0\mathbb{B^*}=\emptyset$. 
		}
\end{Remark}
\medskip
Characterization for the calmness at one point and Lemma~\ref{CharacterizationSet} allow us to find a necessary and sufficient condition for the local Lipschitz continuity of a proper convex function and to establish the  equivalence of its local Lipschitz continuity and its calmness at every point in the corresponding neighborhood. 
\begin{Theorem}\label{NonLSC} Let $f: X \to \overline{\mathbb{R}}$ be a proper convex function,
	 $\bar{x}\in\operatorname{dom}f$ and $r>0, \ell\geq 0$. Then, $f$ is $\ell-$Lipschitz on $\operatorname{dom}f\cap B(\bar{x};r)$  if and only if one of the following conditions holds
\begin{itemize}
	\item[{\rm (a)}] $f$ is calm at every point in $\operatorname{dom}f\cap B(\bar{x};r)$ with the same modulus $\ell$.
	\item[{\rm (b)}] $\partial f(x)\cap\ell\mathbb{B}^*\ne\emptyset
	$ for all $x\in \operatorname{dom}f\cap B(\bar{x};r)$.
\end{itemize}
\end{Theorem}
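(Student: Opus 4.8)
The plan is to establish the cyclic chain of implications
$$\text{[$f$ is $\ell$-Lipschitz on $D$]}\ \Longrightarrow\ \text{(a)}\ \Longrightarrow\ \text{(b)}\ \Longrightarrow\ \text{[$f$ is $\ell$-Lipschitz on $D$]},$$
where I abbreviate $D:=\operatorname{dom}f\cap B(\bar{x};r)$, a nonempty set since $\bar{x}\in D$. Once this loop is closed, all three statements are equivalent.

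The two outer implications are routine. For the first, suppose $f$ is $\ell$-Lipschitz on $D$ and fix $x\in D$. Since $B(\bar{x};r)$ is open, choose $\rho>0$ with $B(x;\rho)\subseteq B(\bar{x};r)$ and take $V:=B(x;\rho)$ as the calmness neighborhood. For $y\in V$, either $y\notin\operatorname{dom}f$, so $f(y)=+\infty\geq f(x)-\ell\|y-x\|$ holds trivially, or $y\in B(x;\rho)\cap\operatorname{dom}f\subseteq D$, in which case the $\ell$-Lipschitz estimate on $D$ gives $f(y)-f(x)\geq-\ell\|y-x\|$; hence $f$ is calm at $x$ with modulus $\ell$, proving (a). For the implication (b) $\Rightarrow$ [$f$ is $\ell$-Lipschitz on $D$], I simply invoke Lemma~\ref{CharacterizationSet} with $S=D$: condition (b) is exactly hypothesis \eqref{Intersection} for this $S$, so the conclusion is that $f$ is $\ell$-Lipschitz on $D$.

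The substantive step is (a) $\Rightarrow$ (b). Fix $x\in D\subseteq\operatorname{dom}f$ and put $g:=f+\ell\|\cdot-x\|$, a proper convex function. Calmness of $f$ at $x$ with modulus $\ell$ says precisely that $g(y)\geq f(x)=g(x)$ for all $y$ in some neighborhood of $x$, i.e.\ $x$ is a local minimizer of the convex function $g$; a standard convexity argument (testing along the segments $x+t(y-x)$ for small $t>0$) promotes this to a global minimizer, so $0\in\partial g(x)$. I then apply the sum rule of Theorem~\ref{SumRule} to $f_1=f$ and $f_2=\ell\|\cdot-x\|$: the latter is finite and continuous on all of $X$ while $f_1(x)$ is finite, so the hypotheses hold and $\partial g(x)=\partial f(x)+\partial\bigl(\ell\|\cdot-x\|\bigr)(x)$. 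Since $\partial\bigl(\|\cdot-x\|\bigr)(x)=\mathbb{B}^*$ (immediate from the subgradient inequality evaluated at the base point $x$), this reads $0\in\partial f(x)+\ell\mathbb{B}^*$, which is equivalent to $\partial f(x)\cap\ell\mathbb{B}^*\neq\emptyset$. As $x\in D$ was arbitrary, (b) follows.

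I expect the only delicate points to be bookkeeping: calmness is phrased with a full neighborhood $V$ rather than with points of $\operatorname{dom}f$ only, which is harmless because $f=+\infty$ off its domain; and the sum rule needs a point of continuity of one summand, which is supplied here by the globally continuous norm term. The conceptual core — converting the one-point, one-sided calmness estimate into a genuine norm-bounded subgradient — is exactly the local-to-global minimizer principle for convex functions combined with the exact subdifferential sum rule, and the rest is assembly. The degenerate case $\ell=0$ requires no change: then $f_2\equiv0$, $\partial f_2(x)=\{0\}$, and the argument goes through verbatim.
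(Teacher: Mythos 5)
Your proof is correct and follows the same cycle of implications as the paper: [$\ell$-Lipschitz] $\Rightarrow$ (a) $\Rightarrow$ (b) $\Rightarrow$ [$\ell$-Lipschitz], with the final step handled by Lemma~\ref{CharacterizationSet} exactly as in the paper. The only difference is that for (a) $\Rightarrow$ (b) the paper cites an external result (an exercise in Penot's book, see also Proposition~5.1 of Mohammadi--Mordukhovich--Sarabi), whereas you supply a correct self-contained argument via the local-to-global minimizer principle applied to $f+\ell\|\cdot-x\|$ together with the exact sum rule of Theorem~\ref{SumRule}.
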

\begin{proof} Clearly, if $f$ is $\ell-$Lipschitz on $\operatorname{dom}f\cap B(\bar{x};r)$ then $f$  is calm at every point in the latter set with the same modulus $\ell$. Moreover, (a) implies (b) by \cite[Exercise 3, page 212]{Penot13} (see also \cite[Proposition~5.1]{MMS19}).  By Lemma~\ref{CharacterizationSet}, (b) implies that $f$ is $\ell-$Lipschitz on $\operatorname{dom}f\cap B(\bar{x};r)$.
$\hfill\Box$
\end{proof}

\medskip
As a direct application of Theorem~\ref{NonLSC}, we get the criteria for the global Lipschitz continuity of a proper convex function.
\begin{Corollary}\label{Global} Let $f: X \to \overline{\mathbb{R}}$ be a proper convex function and $\ell\geq 0$. Then, $f$ is $\ell-$Lipschitz on $\operatorname{dom}f$   if and only if one of the following conditions holds
\end{Corollary}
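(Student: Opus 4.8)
The plan is to reduce the statement to Theorem~\ref{NonLSC} by exhausting $\operatorname{dom}f$ with balls. Since $f$ is proper, $\operatorname{dom}f\ne\emptyset$; I would fix any $\bar{x}\in\operatorname{dom}f$ and use that $\{\operatorname{dom}f\cap B(\bar{x};r)\}_{r>0}$ is an increasing family of sets whose union is $\operatorname{dom}f$, and that any two points of $\operatorname{dom}f$ lie in $\operatorname{dom}f\cap B(\bar{x};r)$ for all sufficiently large $r$. I expect the two conditions in the corollary to be the global counterparts of those in Theorem~\ref{NonLSC}, namely: (a) $f$ is calm at every point of $\operatorname{dom}f$ with the same modulus $\ell$; and (b) $\partial f(x)\cap\ell\mathbb{B}^*\ne\emptyset$ for all $x\in\operatorname{dom}f$.

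First I would show that $\ell$-Lipschitz continuity of $f$ on $\operatorname{dom}f$ implies (a) and (b). Calmness at each point with modulus $\ell$ is immediate from the definition of Lipschitz continuity, giving (a); equivalently, apply Theorem~\ref{NonLSC} to the ball $\operatorname{dom}f\cap B(x;r)$ for each $x\in\operatorname{dom}f$ and any fixed $r>0$. For (b), since $f$ is $\ell$-Lipschitz on $\operatorname{dom}f\cap B(\bar{x};r)$ for every $r>0$, Theorem~\ref{NonLSC}(b) yields $\partial f(x)\cap\ell\mathbb{B}^*\ne\emptyset$ for every $x$ in that ball, and letting $r\to\infty$ gives (b) on all of $\operatorname{dom}f$. (One cannot invoke the "$\Longleftarrow$" half of Lemma~\ref{CharacterizationSet} here, since \eqref{Inclusion} requires $S$ open and $\operatorname{dom}f$ need not be; routing through Theorem~\ref{NonLSC} avoids this.)

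Next I would close the loop by proving (b)$\Rightarrow\ell$-Lipschitz and (a)$\Rightarrow\ell$-Lipschitz. If (b) holds, then taking $S=\operatorname{dom}f$ in Lemma~\ref{CharacterizationSet} the hypothesis \eqref{Intersection} is met, so $f$ is $\ell$-Lipschitz on $\operatorname{dom}f$. If (a) holds, then for arbitrary $x,y\in\operatorname{dom}f$ I would pick $r>\max\{\|x-\bar{x}\|,\|y-\bar{x}\|\}$, so that $x,y\in\operatorname{dom}f\cap B(\bar{x};r)$; since (a) restricts to calmness with modulus $\ell$ at every point of that ball, Theorem~\ref{NonLSC} gives that $f$ is $\ell$-Lipschitz on $\operatorname{dom}f\cap B(\bar{x};r)$, whence $|f(x)-f(y)|\le\ell\|x-y\|$. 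As $x,y$ were arbitrary, $f$ is $\ell$-Lipschitz on $\operatorname{dom}f$.

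The substantive ingredients are all imported from the earlier results: the equivalence of calmness and the existence of a bounded subgradient selection (Theorem~\ref{NonLSC}, which rests on the calmness characterization of \cite[Exercise 3, p.~212]{Penot13}) and the elementary Lemma~\ref{CharacterizationSet}. Accordingly I do not anticipate a real obstacle; the only point requiring care is the local-to-global passage, and this is harmless: $\ell$-Lipschitz continuity with a \emph{fixed} constant on each member of a family of sets that covers $\operatorname{dom}f$ and that, for every pair of points, contains a member holding both of them, automatically propagates to $\operatorname{dom}f$.
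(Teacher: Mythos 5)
Your proof is correct and follows exactly the route the paper intends: the paper gives no written argument for this corollary, calling it a direct application of Theorem~\ref{NonLSC}, and your ball-exhaustion reduction supplies precisely those details (including the correct observation that one should not invoke the open-set half of Lemma~\ref{CharacterizationSet} on $\operatorname{dom}f$). The only omission is that the corollary actually lists a third equivalent condition --- local Lipschitz continuity at every point of $\operatorname{dom}f$ with the same modulus $\ell$ --- which slots in trivially, since global Lipschitz continuity implies it and it in turn implies your calmness condition.
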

\begin{itemize}
	\item[{\rm (a)}] $f$ is locally Lipschitz at every point in $\operatorname{dom}f$ with the same modulus $\ell$.
	\item[{\rm (b)}] $f$ is calm at every point in $\operatorname{dom}f$ with the same modulus $\ell$.
	\item[{\rm (c)}] $\partial f(x)\cap\ell\mathbb{B}^*\ne\emptyset
	$ for all $x\in \operatorname{dom}f$.
\end{itemize}
\begin{Remark} {\rm Since the subdifferential of a proper convex function may be unbounded at boundary points of its domain, the $\ell-$Lipschitz continuity of $f$ on $\operatorname{dom}f$  can not imply
	$$
	\partial f(x)\subset \ell\mathbb{B}^*, \quad \forall x\in\operatorname{dom}f.
	$$
Indeed, the function $f(x)=\delta_{[0,1]}(x)$ is proper convex and $0-$Lipschitz function on $\operatorname{dom}f=[0,1]$ and its subdifferential is given by
$$
\partial f(x)=
\begin{cases}
\{0\} & \text{if } x\in (0,1),\\
(-\infty,0] &\text{if } x=0,\\
[0,+\infty) &\text{if } x=1.
\end{cases}
$$
Clearly, $\partial f(x)\subset 0\mathbb{B}^*$ when $x\in (0,1)$ while $\partial f(x)\not\subset 0\mathbb{B}^*$
when $x\in\{0,1\}$.
}
\end{Remark}

\medskip
We come to characterizations of the local Lipschitz continuity of a proper l.s.c. convex function in terms of 
the intersections of its subdifferential and the normal cone to its domain. 
\begin{Theorem}\label{LipschitzCharacterization}  Let $f: X \to \overline{\mathbb{R}}$ be a proper l.s.c. convex function,  $\bar{x}\in\operatorname{dom}f$, and $r>0, \ell\geq 0$.  Then, $f$ is $\ell-$Lipschitz on  $\operatorname{dom}f\cap B(\bar{x};r)$ if and only if one of the following conditions holds
		\begin{itemize}
			\item[{\rm (a)}]  
			$\emptyset\ne\partial f(x) \subset N_{\ell}(x; \operatorname{dom}f)$ for all
			$x\in\operatorname{dom}f\cap B(\bar{x};r)$.
			\item[{\rm (b)}]  
			$\partial f(x) \cap N_{\ell}(x; \operatorname{dom}f)\ne\emptyset$ for all
			 $x\in\operatorname{dom}f\cap B(\bar{x};r)$ .
		\end{itemize}
\end{Theorem}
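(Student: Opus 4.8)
The plan is to prove the implications ``$\ell-$Lipschitz on $D$'' $\Rightarrow$ (a), (a) $\Rightarrow$ (b), (b) $\Rightarrow$ (a), and (a) $\Rightarrow$ ``$\ell-$Lipschitz on $D$'', where $D:=\operatorname{dom}f\cap B(\bar x;r)$; together these give every asserted equivalence. Note that $D$ is convex and nonempty ($\bar x\in D$), so $[a,b]\subseteq D$ whenever $a,b\in D$. For the first implication, nonemptiness of $\partial f(x)$ on $D$ is already contained in Theorem~\ref{NonLSC} (the hypothesis forces its condition (b)); for the inclusion, I would fix $x\in D$, $x^*\in\partial f(x)$ and $y\in\operatorname{dom}f$, observe that $x_t:=x+t(y-x)$ lies in $\operatorname{dom}f$ by convexity and in the \emph{open} ball $B(\bar x;r)$ for all small $t>0$, hence in $D$, and combine $\langle x^*,x_t-x\rangle\le f(x_t)-f(x)\le\ell\|x_t-x\|$ with division by $t$ to obtain $\langle x^*,y-x\rangle\le\ell\|y-x\|$, i.e.\ $x^*\in N_\ell(x;\operatorname{dom}f)$ by \eqref{EpsilonNormal}.

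The implication (a) $\Rightarrow$ (b) is immediate, since $\partial f(x)$ is nonempty and contained in $N_\ell(x;\operatorname{dom}f)$. I expect the converse (b) $\Rightarrow$ (a) to be the step requiring the most care, because a subgradient lying in $N_\ell(x;\operatorname{dom}f)$ need not belong to $\ell\mathbb{B}^*$ at a boundary point of $\operatorname{dom}f$ (as elementary indicator-function examples show), so one cannot argue pointwise via Lemma~\ref{CharacterizationSet}. Instead I would use monotonicity of $\partial f$ through an intermediate point. Given $x\in D$, $x^*\in\partial f(x)$ and $y\in\operatorname{dom}f$, pick $t\in(0,1)$ small enough that $z:=x+t(y-x)\in D$, and by (b) choose $z^*\in\partial f(z)\cap N_\ell(z;\operatorname{dom}f)$. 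Monotonicity \eqref{monotone} at $x$ and $z$ gives $\langle z^*-x^*,z-x\rangle\ge0$, hence $\langle x^*,y-x\rangle\le\langle z^*,y-x\rangle$ after dividing by $t$; and $z^*\in N_\ell(z;\operatorname{dom}f)$ applied to the point $y\in\operatorname{dom}f$ gives $\langle z^*,y-z\rangle\le\ell\|y-z\|$, hence $\langle z^*,y-x\rangle\le\ell\|y-x\|$ after dividing by $1-t$. Chaining these yields $\langle x^*,y-x\rangle\le\ell\|y-x\|$; as $y\in\operatorname{dom}f$ was arbitrary, $x^*\in N_\ell(x;\operatorname{dom}f)$, so (a) holds, its nonemptiness clause coming from (b).

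It remains to prove (a) $\Rightarrow$ ``$\ell-$Lipschitz on $D$'', and this is where lower semicontinuity is used, via Zagrodny's mean value theorem. Fix $a\ne b$ in $D$; by symmetry it suffices to bound $f(b)-f(a)$. Theorem~\ref{MeanValue} furnishes $x_n\rightarrow_f c\in[a,b[$ and $x_n^*\in\partial f(x_n)$ with $\tfrac{\|b-c\|}{\|b-a\|}\bigl(f(b)-f(a)\bigr)\le\liminf_{n\to\infty}\langle x_n^*,b-x_n\rangle$. Since $c\in[a,b[\subseteq D$ and $B(\bar x;r)$ is open, $x_n\in D$ for large $n$, so (a) gives $x_n^*\in N_\ell(x_n;\operatorname{dom}f)$ and hence $\langle x_n^*,b-x_n\rangle\le\ell\|b-x_n\|$ (because $b\in\operatorname{dom}f$); passing to the limit, $\liminf_{n\to\infty}\langle x_n^*,b-x_n\rangle\le\ell\|b-c\|$. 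As $c\ne b$, dividing by $\|b-c\|/\|b-a\|>0$ yields $f(b)-f(a)\le\ell\|b-a\|$, and symmetry concludes. (Alternatively, (b) $\Rightarrow$ ``$\ell-$Lipschitz'' can be obtained directly, without lower semicontinuity, by telescoping the subgradient inequalities along a uniform partition $z_0=a,\dots,z_m=b$ of $[a,b]$, using that each selected $z_i^*\in\partial f(z_i)\cap N_\ell(z_i;\operatorname{dom}f)$ bounds the increment in the fixed direction $b-a$ by $\ell$, the single uncontrolled endpoint term being washed out as $m\to\infty$.)
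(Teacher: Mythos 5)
Your proposal is correct, and it reaches the theorem by a genuinely different route than the paper in both directions. For the necessity part the paper invokes the Hiriart-Urruty extension (Theorem~\ref{ExstensionClassical}) to build a globally $\ell$-Lipschitz convex $\tilde f$, then applies the sum rule (Theorem~\ref{SumRule}) to $\tilde f+\delta_{\operatorname{dom}f\cap B(\bar x;r)}$ together with \eqref{localnormal} and \eqref{sumnormal} to land in $N_\ell(x;\operatorname{dom}f)$; your difference-quotient argument along the ray $x_t=x+t(y-x)$ gets the same inclusion directly from the definitions and is more elementary (it sidesteps the extension, the sum rule, and the verification that $\partial f(x)\subset\partial(\tilde f+\delta)(x)$). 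For sufficiency the paper proves (b) $\Rightarrow$ Lipschitz in one shot: Zagrodny between $x$ and an \emph{interior} point $z\in\,]x,y[$, monotonicity against the selected $z^*\in\partial f(z)\cap N_\ell(z;\operatorname{dom}f)$, and then lower semicontinuity to pass $z\to y$. You instead interpolate the pointwise upgrade (b) $\Rightarrow$ (a) via monotonicity through $z=x+t(y-x)$ — showing \emph{every} subgradient on $D$ lies in the $\ell$-normal set — which then lets you apply Zagrodny directly to the endpoints $a,b$ with no intermediate point and no final limit passage; this is cleaner, and the intermediate statement (b) $\Rightarrow$ (a) is of independent interest. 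Your parenthetical telescoping argument along a uniform partition of $[a,b]$ is also valid (the one uncontrolled term $\tfrac1m\langle b^*,b-a\rangle$ indeed vanishes as $m\to\infty$) and is worth noting because it dispenses with Zagrodny and with lower semicontinuity altogether, suggesting the equivalence of (b) with $\ell$-Lipschitz continuity does not actually need the l.s.c.\ hypothesis. All steps check out; the only hypotheses you use — openness of $B(\bar x;r)$, convexity of $\operatorname{dom}f\cap B(\bar x;r)$, and $c\ne b$ in Zagrodny's conclusion — are available.
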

\begin{proof} Since $f$ is proper and convex, $\operatorname{dom}f$ is a nonempty convex set. Suppose that $f$ is $\ell-$Lipschitz on $\operatorname{dom}f\cap B(\bar{x};r)$.
	By Theorem~\ref{NonLSC}, $f$ is subdifferentiable on $\operatorname{dom}f\cap B(\bar{x};r)$.
Invoking Theorem~\ref{ExstensionClassical}, we can construct an $\ell-$Lipschitz convex function $\tilde{f}:X\rightarrow\mathbb{R}$ such that
$$
\tilde{f}(x)=f(x), \quad \forall x\in \operatorname{dom}f\cap B(\bar{x};r).
$$
Since $\tilde{f}$ is $\ell-$Lipschitz continuous on $X$, we have
$\partial\tilde{f}(x)\subset \ell\mathbb{B}^*$ for all $x\in X$.
Hence, for every $x\in \operatorname{dom}f\cap B(\bar{x};r)$, applying Theorem~\ref{SumRule} 
for the convex functions $\tilde{f}$ and $\delta_{\operatorname{dom}f\cap B(\bar{x};r)}$ 
and using \eqref{localnormal} and \eqref{sumnormal}, we get the following 
inclusions
\begin{eqnarray*}
\partial f(x)&\subset&\partial\left(\tilde{f}+\delta_{\operatorname{dom}f\cap B(\bar{x};r)}\right)(x)\\
&=&\partial\tilde{f}(x)+N(x;\operatorname{dom}f\cap B(\bar{x};r))\\
&\subset&\ell\mathbb{B}^*+N(x;\operatorname{dom}f)\\
&=& N_{\ell}(x; \operatorname{dom}f).
\end{eqnarray*}
The implication ${\rm (a)}\Rightarrow{\rm (b)}$ is trivial. 

Suppose now that (b) holds, i.e., for every $x\in\operatorname{dom}f\cap B(\bar{x};r)$ we have
$\partial f(x) \cap N_{\ell}(x; \operatorname{dom}f)\ne\emptyset$.
Let $x,y\in  \operatorname{dom}f\cap B(\bar{x};r)$ be such that $x\ne y$ and let z be any point in $]x,y[$. Applying Theorem~\ref{MeanValue} for the l.s.c. proper convex $f$ and two distinct points $x,z$, we can find sequences $x_n\rightarrow v\in[x,z[$ and $x_n^*\in\partial f(x_n)$ such that
\begin{equation}\label{liminf}
	\frac{\|z-v\|}{\|z-x\|}(f(z)-f(x))\leq\liminf_{n\rightarrow\infty}\langle x_n^*,z-x_n\rangle.
\end{equation}
Let $z^*$  be any point in $\partial f(z) \cap N_{\ell}(z; \operatorname{dom}f)$. 
By the monotonicity of $\partial f$, for every $n\in\mathbb{N}$, we have
\begin{eqnarray*}
	\langle x_n^*,z-x_n\rangle&\leq&\langle z^*,z-x_n\rangle\\
	&=&\langle z^*,z-v\rangle+ \langle z^*,v-x_n\rangle\\
	&=&\frac{\|z-v\|}{\|y-z\|}\langle z^*,y-z\rangle+ \langle z^*,v-x_n\rangle\\
	&\leq&\frac{\|z-v\|}{\|y-z\|}\ell\|y-z\|+ \langle z^*,v-x_n\rangle\\
	&=&\ell\|z-v\|+ \langle z^*,v-x_n\rangle.
\end{eqnarray*}
It follows from \eqref{liminf} that 
$$
f(z)-f(x)\leq\ell\|z-x\|. 
$$
Taking $z\rightarrow y$ in the above inequality and using the lower semicontinuity of $f$, we get
$$
f(y)-f(x)\leq\ell\|y-x\|
$$
and so $f$ is $\ell-$Lipschitz on $\operatorname{dom}f$.
$\hfill\Box$
\end{proof}

\medskip
Employing Theorem~\ref{LipschitzCharacterization}, we derive the necessary and sufficient conditions for the global Lipschitz continuity of a proper l.s.c. convex function. 
\begin{Corollary}\label{normal} Let $f: X \to \overline{\mathbb{R}}$ be a proper l.s.c. convex function and $\ell\geq 0$. Then, $f$ is $\ell-$Lipschitz on $\operatorname{dom}f$  if and only if one of the following conditions holds
\begin{itemize}
	\item[{\rm (a)}]  
	$\emptyset\ne\partial f(x) \subset N_{\ell}(x; \operatorname{dom}f)$ for all
	$x\in\operatorname{dom}f$.
	\item[{\rm (b)}]  
	$\partial f(x) \cap N_{\ell}(x; \operatorname{dom}f)\ne\emptyset$ for all
	$x\in\operatorname{dom}f$ .
\end{itemize}
\end{Corollary}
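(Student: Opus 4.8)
The plan is to deduce Corollary~\ref{normal} directly from Theorem~\ref{LipschitzCharacterization} by a localization argument, using the elementary fact that $\ell$-Lipschitz continuity is a condition on pairs of points and hence can always be tested inside a single ball. First I would record the observation that $f$ is $\ell$-Lipschitz on $\operatorname{dom}f$ if and only if $f$ is $\ell$-Lipschitz on $\operatorname{dom}f\cap B(\bar{x};r)$ for every $\bar{x}\in\operatorname{dom}f$ and every $r>0$. The ``only if'' direction is trivial because $\operatorname{dom}f\cap B(\bar{x};r)\subset\operatorname{dom}f$; the ``if'' direction follows because any two points $x,y\in\operatorname{dom}f$ both lie in $\operatorname{dom}f\cap B(x;\|x-y\|+1)$, so the Lipschitz estimate on this particular set already yields $|f(x)-f(y)|\leq\ell\|x-y\|$.

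For the forward implication, assume $f$ is $\ell$-Lipschitz on $\operatorname{dom}f$. Then $f$ is $\ell$-Lipschitz on $\operatorname{dom}f\cap B(\bar{x};r)$ for every $\bar{x}\in\operatorname{dom}f$ and $r>0$, so Theorem~\ref{LipschitzCharacterization}(a) gives $\emptyset\ne\partial f(x)\subset N_{\ell}(x;\operatorname{dom}f)$ for all $x\in\operatorname{dom}f\cap B(\bar{x};r)$. Since every $x\in\operatorname{dom}f$ belongs to $\operatorname{dom}f\cap B(x;1)$, this proves (a), and (b) is then immediate since (a) implies (b).

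For the converse, because (a) implies (b) it suffices to show that (b) implies $\ell$-Lipschitz continuity on $\operatorname{dom}f$. Fix $\bar{x}\in\operatorname{dom}f$ and $r>0$. The restriction of hypothesis (b) to $\operatorname{dom}f\cap B(\bar{x};r)$ is precisely condition (b) of Theorem~\ref{LipschitzCharacterization}, so that theorem yields that $f$ is $\ell$-Lipschitz on $\operatorname{dom}f\cap B(\bar{x};r)$. As $\bar{x}$ and $r$ were arbitrary, the observation from the first paragraph shows $f$ is $\ell$-Lipschitz on $\operatorname{dom}f$.

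I do not expect a genuine obstacle here; the only point needing a little care is that $\operatorname{dom}f$ need not be bounded, so one cannot invoke Theorem~\ref{LipschitzCharacterization} ``at infinity'' in a single stroke. The localization to balls $B(\bar{x};r)$ is what makes the reduction legitimate, and it works precisely because the Lipschitz inequality for a fixed pair of points is already visible inside a sufficiently large ball centered at one of them.
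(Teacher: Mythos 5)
Your proof is correct and takes essentially the same route as the paper: the paper states Corollary~\ref{normal} as a direct consequence of Theorem~\ref{LipschitzCharacterization} without writing out the details, and your localization to balls $B(\bar{x};r)$ (with the observation that any pair $x,y\in\operatorname{dom}f$ lies in a single such ball) is precisely the deduction the authors intend.
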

\section{Lipschitz continuity on bounded and open subsets}
In this section we obtain characterizations of Lipschitz continuity of a proper l.s.c. convex function on an open and bounded (not necerrarily convex) set in term of the information on the boundary of the given set. 
We begin with a criterion based on the existence of a bounded selection of the subdifferential operator. 
\begin{Theorem} \label{LipschitzBoundary} Let $f: X \to \overline{\mathbb{R}}$ be a proper l.s.c. convex function, $\ell\geq 0$ and $S$ be a nonempty set such that $\overline{S}\subset\operatorname{dom}f$. Consider the following statements
	\begin{itemize}
		\item[{\rm (a)}] $\partial f(x)\cap \ell\mathbb{B}^*\ne\emptyset$ for all $x\in\operatorname{bd}(S)$. 
		\item[{\rm (b)}] $f$ is $\ell-$Lipschitz on $S$.
	\end{itemize}
If $S$ is bounded then {\rm (a)}$\Rightarrow${\rm (b)}. Conversely,  if  $S$ is open then  {\rm (b)}$\Rightarrow${\rm (a)}.
\end{Theorem}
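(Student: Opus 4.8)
The plan is to prove the two implications by rather different routes, with Lemma~\ref{CharacterizationSet} serving as the bridge in each case.

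For the converse, suppose $S$ is open and $f$ is $\ell$-Lipschitz on $S$, and fix $\bar x\in\operatorname{bd}(S)$; note $\bar x\in\overline S\subseteq\operatorname{dom}f$, so $f(\bar x)\in\mathbb{R}$. I would choose $x_n\in S$ with $x_n\to\bar x$, invoke Lemma~\ref{CharacterizationSet} on the open set $S$ to get $\emptyset\ne\partial f(x_n)\subseteq\ell\mathbb{B}^*$, and pick $x_n^*\in\partial f(x_n)$. The Lipschitz bound makes $(f(x_n))$ Cauchy, so $f(x_n)\to L$ with $L\ge f(\bar x)$ by lower semicontinuity. I would then pass to the limit in $f(y)\ge f(x_n)+\langle x_n^*,y-x_n\rangle$: the cross term $\langle x_n^*,x_n-\bar x\rangle$ vanishes because $\|x_n^*\|\le\ell$ and $x_n\to\bar x$, and a weak$^*$ cluster point $\bar x^*$ of $(x_n^*)$—which exists in $\ell\mathbb{B}^*$ by the Banach--Alaoglu theorem—then satisfies $f(y)\ge L+\langle\bar x^*,y-\bar x\rangle\ge f(\bar x)+\langle\bar x^*,y-\bar x\rangle$ for all $y$, i.e. $\bar x^*\in\partial f(\bar x)\cap\ell\mathbb{B}^*$, which is {\rm (a)}. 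Since $X^*$ need not be weak$^*$-sequentially compact, this cluster point has to be extracted along a subnet rather than a subsequence; that is the only point requiring care here, and one may equally package the same limiting step through the maximal monotonicity of $\partial f$.

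For the direct implication, assume $S$ is bounded and {\rm (a)} holds. By Lemma~\ref{CharacterizationSet} it suffices to produce, for each $x_0\in S$, some $x_0^*\in\partial f(x_0)$ with $\|x_0^*\|\le\ell$. If $x_0\in\operatorname{bd}(S)$ this is exactly {\rm (a)}, so the real work is the case $x_0\in\operatorname{int}(S)$, where there is a priori no subdifferential data and it must be manufactured from the boundary. The idea is to fire a ray $t\mapsto x_0+td$ with $\|d\|=1$ from $x_0$: since $\operatorname{int}(S)$ is open and bounded, the ray stays in $\operatorname{int}(S)$ for small $t$ and leaves, and its first exit point $b=x_0+t_0 d$ lies in $\operatorname{bd}(\operatorname{int}(S))\subseteq\operatorname{bd}(S)$, with the whole segment $[x_0,b]\subseteq\overline S\subseteq\operatorname{dom}f$. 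Taking $b^*\in\partial f(b)\cap\ell\mathbb{B}^*$ from {\rm (a)} gives $f(b)-f(x_0)\le\langle b^*,b-x_0\rangle\le\ell t_0$, and the convexity of the one–variable function $t\mapsto f(x_0+td)$ on $[0,t_0]$ bounds its one-sided derivative at $0$ by the secant slope, so that $f'(x_0;d):=\lim_{t\downarrow 0}\frac{f(x_0+td)-f(x_0)}{t}\le\frac{f(b)-f(x_0)}{t_0}\le\ell$. Letting $d$ range over all unit vectors and using positive homogeneity, $f'(x_0;\cdot)\le\ell\|\cdot\|$ (and $\ge-\ell\|\cdot\|$ by subadditivity), so $f'(x_0;\cdot)$ is a finite sublinear functional dominated by $\ell\|\cdot\|$; a Hahn--Banach selection $x_0^*\le f'(x_0;\cdot)$ then lies in $\partial f(x_0)$ and has norm at most $\ell$.

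I expect the bulk of the care to fall in the direct implication: checking rigorously, in an arbitrary Banach space, that the first exit point of the ray is genuinely a boundary point of $S$ and that the segment up to it stays inside $\operatorname{dom}f$, and then assembling the pointwise directional estimates into an honest bounded subgradient via Hahn--Banach. The one-dimensional convexity inequalities and the Hahn--Banach step are routine, and the converse is short once one is comfortable with weak$^*$ subnets; note also that lower semicontinuity is used only in the converse direction.
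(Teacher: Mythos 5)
Your proof is correct, but the forward implication follows a genuinely different route from the paper's. For (b)$\Rightarrow$(a) with $S$ open, your argument (bounded subgradients along $x_n\to\bar x$, a weak$^*$ cluster point via Banach--Alaoglu along a subnet, then passing to the limit in the subgradient inequality using lower semicontinuity to control $f(x_n)\to L\ge f(\bar x)$) is essentially the paper's proof, which packages the same limit step through the maximal monotonicity of $\partial f$ exactly as you anticipate. For (a)$\Rightarrow$(b) with $S$ bounded, the paper does \emph{not} construct a bounded subgradient at interior points: it fixes $x,y\in S$, and when $y\in\operatorname{int}(S)$ it takes the exit point $z=y+\alpha(y-x)\in\operatorname{bd}(S)$ of the single ray from $x$ through $y$, picks any $y^*\in\partial f(y)$ (nonempty since $y\in\operatorname{int}(\operatorname{dom}f)$) and any $z^*\in\partial f(z)\cap\ell\mathbb{B}^*$, and uses plain monotonicity $\langle y^*,y-z\rangle\ge\langle z^*,y-z\rangle$ to get the one-sided estimate $f(x)-f(y)\ge\langle y^*,x-y\rangle\ge\langle z^*,x-y\rangle\ge-\ell\|x-y\|$, which by symmetry in $x,y$ gives the Lipschitz bound. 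You instead bound \emph{all} directional derivatives $f'(x_0;d)$ at an interior point by firing exit rays in every direction and then extract a subgradient of norm at most $\ell$ by Hahn--Banach, feeding the result into Lemma~\ref{CharacterizationSet}. Your exit-point topology ($b\in\operatorname{bd}(\operatorname{int}S)\subseteq\operatorname{bd}(S)$, $[x_0,b]\subseteq\overline S\subseteq\operatorname{dom}f$) and the sublinearity/finiteness of $f'(x_0;\cdot)$ check out, so the argument is complete; it is longer than the paper's, but it buys the stronger intermediate conclusion that $\partial f(x)\cap\ell\mathbb{B}^*\ne\emptyset$ at \emph{every} point of $S$, not only on the boundary, whereas the paper's monotonicity trick never needs Hahn--Banach or any quantification over directions.
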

\begin{proof} Suppose that $S$ is bounded and (a) holds. Let $x,y$ be two distinct points in $S$. We will show that
	\begin{equation}\label{left}
		f(x)-f(y)\geq -\ell\|x-y\|
	\end{equation}
	and so $f$ is $\ell-$Lipschitz on $S$. 
	Indeed, if $y\in\operatorname{bd}(S)$ then, by (a), there exists $y^*\in\partial f(y)$ such that $\|y^*\|\leq\ell$. By the definition of subdifferential we have
	$$
	f(x)-f(y)\geq \langle y^*,y-x\rangle\geq-\|y^*\|\|x-y\|\geq-\ell\|x-y\|
	$$  
and so \eqref{left} is satisfied.	If $y\in\text{int}S$ then $y$ is an interior point of $\operatorname{dom}f$ and thus $\partial f(y)\ne\emptyset$.  
By the boundedness of $S$,
there exists $\alpha>0$ such that $z:=y+\alpha(y-x)\in \operatorname{bd}(S)$. 
Again, by (a), we can find $z^*\in\partial f(z)$ such that $\|z^*\|\leq\ell$. 
Picking $y^*\in\partial f(y)$ and using the monotonicity of $\partial f$, we have 
\begin{equation}\label{Monotone}
\langle y^*,x-y\rangle=(1/\alpha)\langle y^*,y-z\rangle\geq(1/\alpha)\langle z^*,y-z\rangle=
\langle z^*,x-y\rangle
\end{equation}
Furthermore, since  $\|z^*\|\leq\ell$, we have
\begin{equation}\label{CauchySchwarz}
\langle z^*,x-y\rangle\geq-\|z^*\|\|x-y\|\geq-\ell\|x-y\|. 
\end{equation}
From the inequalities \eqref{Monotone} and \eqref{CauchySchwarz}, we get \eqref{left} by the following estimates
$$
f(x)-f(y)\geq\langle y^*,x-y\rangle\geq\langle z^*,x-y\rangle\geq -\ell\|x-y\|.
$$

Suppose now that $S$ is open and $f$ is $\ell-$Lipschitz on $S$. 
Let $x\in\operatorname{bd}(S)$  and  $\{x_n\}\subset S$ be a sequence such that $x_n\rightarrow x$. 
Since $S$ is open and $f$ is $\ell-$Lipschitz on $S$, by Lemma~\ref{CharacterizationSet}, there exists a sequence $\{x^*_n\}\subset X^*$ such that $x_n^*\in\partial f(x_n)$ and $x^*_n\in \ell\mathbb{B}^*$ for all $n\in\mathbb{N}$.
By the Banach-Alaoglu theorem, there exist $x^*\in X^*$ and subnet $\{x^*_i\}_{i\in I}$ of $\{x^*_n\}$ such that $\{x^*_i\}_{i\in I}$ is  weakly-star convergent to $x^*$ and $\|x^*\|\leq\ell$. By the maximal monotonicity of $\partial f$, we have $x^*\in\partial f(x)$ (see \cite[Fact 3.5]{BorweinYao13} or \cite[Section 2, page 539]{BFG03}) and so (a) holds. 
$\hfill\Box$  
\end{proof}

\begin{Remark} {\rm In Theorem~\ref{LipschitzBoundary}, if $S$ is  unbounded then (a)$\Rightarrow$(b) does not hold. Indeed, for $f(x)=x^2$ and $S=[0,+\infty)$ we have $\operatorname{bd}(S)=\{0\}$ and for every $\ell\geq 0$ we have
		$$
		\partial f(x)\cap\ell\mathbb{B}^*=\{0\}\ne\emptyset, \quad \forall x\in \operatorname{bd}(S).
		$$
	However, $f$ is not Lipschitz on $S$. 
		}
\end{Remark}

\medskip
With the help of Theorem~\ref{LipschitzBoundary}, we will show that the Lipschitz continuity on an open bounded set is equivalent to local Lipschitz continuity on the boundary of the corresponding set. 
\begin{Theorem} Let $f: X \to \overline{\mathbb{R}}$ be a proper l.s.c. convex function, $\ell\geq 0$ and $S$  be a nonempty open and bounded set such that $\overline{S}\subset\operatorname{dom}f$. Then, $f$ is $\ell-$Lipschitz on $S$ 
	if and only if $f$ is locally $\ell-$Lipschitz with respect to $S$ at every point in $\operatorname{bd}(S)$, i.e., for every $x\in\operatorname{bd}(S)$, there exists $r>0$ such that $f$ is $\ell-$Lipschitz on $S\cap B(x;r)$.
\end{Theorem}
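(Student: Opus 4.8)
The forward direction is immediate: if $f$ is $\ell$-Lipschitz on $S$, then for any $x\in\operatorname{bd}(S)$ and any $r>0$, the set $S\cap B(x;r)$ is contained in $S$, so $f$ is $\ell$-Lipschitz on $S\cap B(x;r)$; hence $f$ is locally $\ell$-Lipschitz with respect to $S$ at $x$. So the substance is the converse.

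For the converse, suppose $f$ is locally $\ell$-Lipschitz with respect to $S$ at every point of $\operatorname{bd}(S)$. The plan is to verify condition (a) of Theorem~\ref{LipschitzBoundary}, namely that $\partial f(x)\cap\ell\mathbb{B}^*\ne\emptyset$ for every $x\in\operatorname{bd}(S)$; since $S$ is open and bounded with $\overline{S}\subset\operatorname{dom}f$, the implication (a)$\Rightarrow$(b) of that theorem then gives that $f$ is $\ell$-Lipschitz on $S$. So fix $x\in\operatorname{bd}(S)$. By hypothesis there is $r>0$ with $f$ being $\ell$-Lipschitz on $S\cap B(x;r)$. The idea is to mimic the ``open, $\ell$-Lipschitz $\Rightarrow$ bounded selection on the boundary'' argument inside Theorem~\ref{LipschitzBoundary}: pick a sequence $\{x_n\}\subset S$ with $x_n\to x$ (possible since $x\in\operatorname{bd}(S)$ and $S$ is nonempty open, so $x\in\overline{S}$); for $n$ large we have $x_n\in S\cap B(x;r)$. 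Since $S\cap B(x;r)$ is open and $f$ is $\ell$-Lipschitz there, Lemma~\ref{CharacterizationSet} yields $x_n^*\in\partial f(x_n)$ with $\|x_n^*\|\le\ell$ for all large $n$. By Banach--Alaoglu, a subnet $\{x_i^*\}$ converges weakly-star to some $x^*$ with $\|x^*\|\le\ell$; by maximal monotonicity of $\partial f$ (as $f$ is proper l.s.c. convex), $x^*\in\partial f(x)$. Thus $\partial f(x)\cap\ell\mathbb{B}^*\ne\emptyset$, which is (a).

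The main point to be careful about is that the local Lipschitz hypothesis is stated ``with respect to $S$'' (i.e.\ on $S\cap B(x;r)$), not ``with respect to $\operatorname{dom}f$'', so one cannot directly invoke the characterization of local Lipschitz continuity from Section~3; one must instead extract the bounded subgradients from the open set $S\cap B(x;r)$ via Lemma~\ref{CharacterizationSet}, exactly as above. No genuine obstacle remains beyond this bookkeeping: once (a) is established, Theorem~\ref{LipschitzBoundary} closes the argument, and the boundedness of $S$ is used precisely there (in the (a)$\Rightarrow$(b) direction).
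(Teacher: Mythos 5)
Your proposal is correct and follows essentially the same route as the paper: verify condition (a) of Theorem~\ref{LipschitzBoundary} at each boundary point by applying Lemma~\ref{CharacterizationSet} on the nonempty open set $S\cap B(x;r)$, extract a weak-star convergent subnet of bounded subgradients via Banach--Alaoglu, pass to the limit using maximal monotonicity, and then invoke (a)$\Rightarrow$(b) of Theorem~\ref{LipschitzBoundary} (which is where boundedness of $S$ enters). Your handling of the trivial forward direction is in fact cleaner and more direct than the paper's remark about passing to $\overline{S}$.
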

\begin{proof} Since Lipschitz continuity on $S$ is equivalent to Lipschitz continuity on $\overline{S}$ with the same modulus, we only need to show local Lipschitz continuity on $\operatorname{bd}(S)$ with the same modulus implies Lipschitz continuity on $S$. Suppose now that $f$ is locally $\ell-$Lipschitz with respect to $S$ at every point in $\operatorname{bd}(S)$. We will use Theorem~\ref{LipschitzBoundary} to show that $f$ is $\ell-$Lipschitz on $S$. 
	Let $x$ be any point in $\operatorname{bd}(S)$. Then, there exists $r>0$ such that $f$ is $\ell-$Lipschitz on nonempty open set $S\cap B(x;r)\subset\operatorname{dom}f$. Therefore,  by Lemma~\ref{CharacterizationSet}, we have
	  $$
	  \partial f(y)\ne\emptyset, \quad \partial f(y)\subset\ell\mathbb{B}^*, \quad \forall y\in S\cap B(x;r).
	  $$
Let $\{x_n\}\subset S$ and $\{x^*_n\}\subset X^*$ be the sequences such that $x_n\rightarrow x$ and $x^*_n\in\partial f(x_n), \|x^*_n\|\leq\ell$ for all $n\in\mathbb{N}$. By the Banach-Alaoglu theorem, there exist $x^*\in X^*$ and subnet $\{x^*_i\}_{i\in I}$ of $\{x^*_n\}$ such that $\{x^*_i\}_{i\in I}$ is  weakly-star convergent to $x^*$ and $\|x^*\|\leq\ell$. By the maximal monotonicity of $\partial f$, we have $x^*\in\partial f(x)$ (see \cite[Fact 3.5]{BorweinYao13} or \cite[Section 2, page 539]{BFG03}) and so 
$\partial f(x)\cap\ell\mathbb{B}^*\ne\emptyset$. By Theorem~\ref{LipschitzBoundary},  $f$ is $\ell-$Lipschitz on $S$.
$\hfill\Box$
\end{proof}

\medskip
The following corollary is a direct application of Theorem\ref{LipschitzBoundary}. It gives the upper bound for the difference of the supremum and the infimum of a proper l.s.c. convex function on an open bounded set  in terms of the Lipschitz constant and diameter of the given set. 
\begin{Corollary} Let $f: X \to \overline{\mathbb{R}}$ be a proper l.s.c. convex function, $\ell\geq 0$ and $\overline{S}\subset\operatorname{dom}f$  be a nonempty bounded set. If there exists $\ell\geq 0$ such that
$\partial f(x)\cap\ell\mathbb{B}^*\ne\emptyset$ for all $x\in\operatorname{bd}(S)$ then 
\begin{equation}\label{supinf}
\sup_{x\in S}f(x)\leq\inf_{x\in S}f(x)+\ell \operatorname{diam}(S),
\end{equation}
where $\operatorname{diam}(S):=\sup\{\|s_1-s_2\|:s_1,s_2\in S\}$ is the diameter of $S$. 
\end{Corollary}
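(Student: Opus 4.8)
The plan is to reduce the statement directly to Theorem~\ref{LipschitzBoundary} and then read off the numerical bound from the resulting Lipschitz estimate; no new argument is really needed.

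First I would check that the hypotheses are exactly those required for the implication {\rm (a)}$\Rightarrow${\rm (b)} in Theorem~\ref{LipschitzBoundary}. Indeed, $f$ is proper l.s.c. convex, the set $S$ is nonempty and \emph{bounded} with $\overline{S}\subset\operatorname{dom}f$, and the assumption $\partial f(x)\cap\ell\mathbb{B}^*\ne\emptyset$ for all $x\in\operatorname{bd}(S)$ is precisely condition {\rm (a)} there. Since $S$ is bounded, Theorem~\ref{LipschitzBoundary} applies and yields that $f$ is $\ell-$Lipschitz on $S$.

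Next I would extract \eqref{supinf}. Because $S$ is bounded, $\operatorname{diam}(S)<\infty$; fixing any $x_0\in S$, the Lipschitz inequality $|f(x)-f(x_0)|\le\ell\|x-x_0\|\le\ell\operatorname{diam}(S)$ shows that $f$ is bounded on $S$, so $\sup_{x\in S}f(x)$ and $\inf_{x\in S}f(x)$ are finite real numbers. For arbitrary $x,y\in S$ we have $f(x)-f(y)\le\ell\|x-y\|\le\ell\operatorname{diam}(S)$, that is $f(x)\le f(y)+\ell\operatorname{diam}(S)$. Taking the supremum over $x\in S$ and then the infimum over $y\in S$ gives \eqref{supinf}.

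I do not expect a genuine obstacle here: the argument is a one-line application of the previous theorem followed by a routine sup/inf manipulation. The only point deserving a word of care is the finiteness of $\sup_{x\in S}f(x)$ and $\inf_{x\in S}f(x)$, which is guaranteed by the boundedness of $S$ together with the $\ell-$Lipschitz property just obtained; the degenerate case in which $S$ is a singleton (so $\operatorname{diam}(S)=0$) is also covered, the inequality then being trivial.
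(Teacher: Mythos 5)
Your proposal is correct and follows exactly the paper's own argument: apply the implication {\rm (a)}$\Rightarrow${\rm (b)} of Theorem~\ref{LipschitzBoundary} to conclude that $f$ is $\ell$-Lipschitz on $S$, then take the supremum over $x$ and the infimum over $y$ in the inequality $f(x)\le f(y)+\ell\operatorname{diam}(S)$. Your extra remark on the finiteness of the supremum and infimum is a harmless elaboration the paper leaves implicit.
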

\begin{proof} According to Theorem~\ref{LipschitzBoundary}, $f$ is $\ell-$Lipschitz on $S$. It follows that, for every $x,y\in S$,
	$$
	f(x)\leq f(y)+\ell\|x-y\|\leq f(y)+\ell\operatorname{diam}(S). 
	$$
Taking the supremum in the variable $x$ of the left hand side and infimum in the variable $y$ 
of the right hand side of the above inequality, we get \eqref{supinf}.
$\hfill\Box$
\end{proof}

\medskip
We end this section by using Theorem~\ref{LipschitzBoundary} to deduce a criterion of the global Lipschitz continuity from the boundedness of the distance function from the origin to the subdifferential operator at infinity. 
\begin{Corollary} Let $f: X \to\mathbb{R}$ be a proper l.s.c. convex function and $\ell\geq 0$.
	Then, $f$ is $\ell-$Lipschitz continuous on $X$ if and only if 
	\begin{equation}\label{Minimal}
	\limsup_{\|x\|\rightarrow+\infty} d(0, \partial f(x)) \leq\ell,
	\end{equation}
	where $d(0, \partial f(x)):= \inf\{\Vert x^*\Vert:\, x^* \in \partial f(x)\}.$
\end{Corollary}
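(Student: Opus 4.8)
The plan is to prove the two implications separately: the forward direction is immediate from the earlier results, while the reverse direction is the substance and is obtained by applying Theorem~\ref{LipschitzBoundary} to the family of open balls $B(0;\rho)$ centered at the origin and then letting $\rho\to+\infty$.

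For necessity, suppose $f$ is $\ell$-Lipschitz on $X$. Since $X$ is open and $\operatorname{dom}f=X$, Lemma~\ref{CharacterizationSet} yields $\emptyset\ne\partial f(x)\subset\ell\mathbb{B}^*$ for every $x\in X$, hence $d(0,\partial f(x))\leq\ell$ for all $x$ and \eqref{Minimal} holds trivially. (Note that a finite-valued proper l.s.c. convex function on a Banach space is continuous, so $\partial f$ has full domain; in any case the finiteness of $d(0,\partial f(x))$ for $x$ of large norm is already forced by \eqref{Minimal}.)

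For sufficiency, assume \eqref{Minimal} and fix $\varepsilon>0$. By the definition of the $\limsup$ along the complements of balls, there is $R>0$ such that $d(0,\partial f(x))<\ell+\varepsilon$ whenever $\|x\|>R$; equivalently, $\partial f(x)\cap(\ell+\varepsilon)\mathbb{B}^*\ne\emptyset$ for all $x$ with $\|x\|>R$. Fix any $\rho>R$ and set $S:=B(0;\rho)$. Then $S$ is a nonempty open bounded set with $\overline{S}\subset X=\operatorname{dom}f$, and $\operatorname{bd}(S)=\{x\in X:\|x\|=\rho\}\subset\{x:\|x\|>R\}$, so condition~(a) of Theorem~\ref{LipschitzBoundary} is satisfied with the modulus $\ell+\varepsilon$. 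Since $S$ is bounded, the implication ${\rm (a)}\Rightarrow{\rm (b)}$ of that theorem gives that $f$ is $(\ell+\varepsilon)$-Lipschitz on $B(0;\rho)$.

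As this holds for every $\rho>R$ and $X=\bigcup_{\rho>R}B(0;\rho)$, $f$ is $(\ell+\varepsilon)$-Lipschitz on $X$: for arbitrary $x,y\in X$ choose $\rho>R$ with $x,y\in B(0;\rho)$ and conclude $|f(x)-f(y)|\leq(\ell+\varepsilon)\|x-y\|$. Letting $\varepsilon\downarrow0$ gives $|f(x)-f(y)|\leq\ell\|x-y\|$ for all $x,y\in X$, i.e. $f$ is $\ell$-Lipschitz on $X$. The only delicate points are translating \eqref{Minimal} into hypothesis~(a) of Theorem~\ref{LipschitzBoundary} on the spheres $\{\|x\|=\rho\}$ and the two successive limit passages; the rest is a direct appeal to the stated results.
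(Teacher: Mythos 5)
Your proof is correct and follows essentially the same route as the paper: both directions rest on Lemma~\ref{CharacterizationSet} (or the standard boundedness of the subdifferential) for necessity, and on applying Theorem~\ref{LipschitzBoundary} to balls centered at the origin whose bounding spheres lie beyond the radius $R$ furnished by \eqref{Minimal}, followed by letting $\varepsilon\downarrow 0$. The only cosmetic difference is that you fix a ball $B(0;\rho)$ and exhaust $X$ by such balls, whereas the paper fixes the pair $x,y$ first and chooses one large ball containing them; these are the same argument.
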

\begin{proof} If $f$ is $\ell-$Lipschitz continuous on $X$ then for every $x\in X$, $\partial f(x)\ne\emptyset$
	and $\partial f(x)\subset\ell\mathbb{B}^*$, and so  \eqref{Minimal} holds.
	Conversely, suppose that \eqref{Minimal} holds. Let $x,y$ be two any points in $X$. By \eqref{Minimal}, for every $\varepsilon>0$ there exists $r>0$ (sufficiently large) such that $x,y\in r\mathbb{B}$ and
	$\partial f(z) \cap (\ell+\varepsilon)\mathbb{B}^* \ne \emptyset$ 
	for all $z\in X$ such that $\|z\|=r$. 
	By Theorem~\ref{LipschitzBoundary}, $f$ is $(\ell+\varepsilon)-$Lipschitz on $r\mathbb{B}$ and so 
	$$
	|f(x)-f(y)|\leq(\ell+\varepsilon)\|x-y\|.
	$$
	Taking $\varepsilon\downarrow 0$ in the latter inequality, we get the conclusion.
	$\hfill\Box$
\end{proof}
\section{Applications}
Two classical problems in convex analysis are investigated in this section: the extension of a Lipschitz and convex function to the whole space and the justification of Lipschitz continuity of its Moreau envelope functions.
\subsection{Extension of Lipschitz and convex functions}
Let $\ell\geq 0$ and $f:X\rightarrow\overline{\mathbb{R}}$ be a proper convex and $\ell-$Lipschitz function
on $\operatorname{dom}f$. 
We say that $\tilde{f}:X\rightarrow\mathbb{R}$ is an extension of $f$ if $\tilde{f}$ is convex, $\ell-$Lipschitz continuous and satisfies
\begin{equation}\label{Relation}
\tilde{f}(x)=f(x)\;\;\text{for all}\;\; x\in\operatorname{dom}f.
\end{equation}
Observe that if $\tilde{f}$ is an extension of $f$ then
\begin{equation}\label{inclusion}
\partial\tilde{f}(x)\subset \partial f(x)\cap\ell\mathbb{B}^*\;\;\text{for all}\;\; x\in\operatorname{dom}f.
\end{equation}
Indeed, since $\tilde{f}$ is $\ell-$Lipschitz continuous on $X$, $\partial\tilde{f}(x)\subset\ell\mathbb{B}^*$
for all $x\in X$. Moreover, by \eqref{Relation}, we have $\partial\tilde{f}(x)\subset\partial f(x)$ for all $x\in\operatorname{dom}f$ . The inclusion \eqref{inclusion} may be strict. Indeed, consider the function $f(x)=\delta_{[0,1]}(x)+x$. Clearly, $f$ is convex and $1-$Lipschitz on $\operatorname{dom}f=[0,1]$. The function $\tilde{f}(x)=x$ is an extension of $f$ with $\partial \tilde{f}(0)=\{1\}$. However, $\partial f(0)\cap1\mathbb{B}^*=[-1,1]$.

\medskip
The next theorem constructs an extension of $f$ such that the inclusion \eqref{inclusion} becomes an equality. 
\begin{Theorem}\label{NewExtension} Let $\ell\geq 0$ and $f: X \to \overline{\mathbb{R}}$ be a proper convex and $\ell-$Lipschitz function on $\operatorname{dom}f$. Consider the function $\tilde{f}:X:\rightarrow\overline{\mathbb{R}}$ given by
\begin{equation}\label{EFunction}
	\tilde{f}(x):=\sup_{\substack{y\in \operatorname{dom}f\\ y^*\in \partial f(y)\cap\ell\mathbb{B}^*}}\left\{\langle y^*,x-y\rangle+f(y)\right\}, \quad \forall x\in X.
\end{equation}
	Then,  $\operatorname{dom}\tilde{f}=X$ and $\tilde{f}$ is a convex and $\ell-$Lipschitz function on $X$ and for every  $x\in\operatorname{dom}f$
\begin{equation}\label{equality}
	\tilde{f}(x)=f(x), \quad \partial\tilde{f}(x)=\partial f(x)\cap\ell\mathbb{B}^*.
\end{equation} 	
\end{Theorem}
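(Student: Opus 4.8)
The plan is to recognize $\tilde f$ as the upper envelope of a nonempty family of affine $\ell$-Lipschitz functions and then read off every assertion from that description. First I would check that the index set in \eqref{EFunction} is nonempty and, more precisely, that for each fixed $y\in\operatorname{dom}f$ there is an admissible $y^*$. This is immediate from Corollary~\ref{Global}: since $f$ is proper convex and $\ell$-Lipschitz on $\operatorname{dom}f$, condition (c) there gives $\partial f(y)\cap\ell\mathbb B^*\ne\emptyset$ for every $y\in\operatorname{dom}f$. For each admissible pair $(y,y^*)$ the map $a_{y,y^*}(x):=\langle y^*,x-y\rangle+f(y)$ is affine, everywhere finite, and $\ell$-Lipschitz on $X$ because $\|y^*\|\le\ell$; hence $\tilde f=\sup_{(y,y^*)}a_{y,y^*}$ is convex and never takes the value $-\infty$.

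Next I would show $\tilde f=f$ on $\operatorname{dom}f$. Fix $x\in\operatorname{dom}f$. Taking $y=x$ together with some $x^*\in\partial f(x)\cap\ell\mathbb B^*$ gives $a_{x,x^*}(x)=f(x)$, so $\tilde f(x)\ge f(x)$. Conversely, for any admissible $(y,y^*)$ the subgradient inequality $f(x)-f(y)\ge\langle y^*,x-y\rangle$ reads $a_{y,y^*}(x)\le f(x)$, and taking the supremum gives $\tilde f(x)\le f(x)$. Thus $\tilde f(x)=f(x)$ on $\operatorname{dom}f$, and trivially $\tilde f\le f$ on all of $X$. Since $\operatorname{dom}f\ne\emptyset$, $\tilde f$ is finite at some point; combined with the fact that each $a_{y,y^*}$ is $\ell$-Lipschitz (so $a_{y,y^*}(x)\le\tilde f(x')+\ell\|x-x'\|$ and hence $\tilde f(x)\le\tilde f(x')+\ell\|x-x'\|$ for all $x,x'\in X$), this forces $\tilde f$ to be real-valued on $X$ with $\operatorname{dom}\tilde f=X$, and swapping $x$ and $x'$ then gives that $\tilde f$ is $\ell$-Lipschitz on $X$.

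Finally I would prove the subdifferential identity in \eqref{equality}. Fix $x\in\operatorname{dom}f$. If $x^*\in\partial\tilde f(x)$, then $x^*\in\ell\mathbb B^*$ because $\tilde f$ is $\ell$-Lipschitz on $X$, and for every $z\in X$ we have $f(z)\ge\tilde f(z)\ge\tilde f(x)+\langle x^*,z-x\rangle=f(x)+\langle x^*,z-x\rangle$, so $x^*\in\partial f(x)$; hence $\partial\tilde f(x)\subset\partial f(x)\cap\ell\mathbb B^*$. For the reverse inclusion, if $x^*\in\partial f(x)\cap\ell\mathbb B^*$ then $(x,x^*)$ is itself an admissible pair, so $\tilde f(z)\ge a_{x,x^*}(z)=f(x)+\langle x^*,z-x\rangle=\tilde f(x)+\langle x^*,z-x\rangle$ for all $z\in X$, i.e.\ $x^*\in\partial\tilde f(x)$.

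There is no real obstacle here: the argument is a direct unwinding of the definitions. The only points deserving attention are verifying that the supremum runs over a nonempty set (for which Corollary~\ref{Global} is exactly what is needed) and keeping track of the $+\infty$ values of $f$ outside $\operatorname{dom}f$ when passing between the estimate $\tilde f\le f$ and the subgradient inequalities; both are routine.
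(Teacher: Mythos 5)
Your proof is correct and follows essentially the same route as the paper: nonemptiness of the index set via Corollary~\ref{Global}, the subgradient inequality to get $\tilde f\le f$ on $\operatorname{dom}f$, the bound $\|y^*\|\le\ell$ to get finiteness and $\ell$-Lipschitz continuity of the supremum, and the two inclusions for $\partial\tilde f(x)$. You are in fact slightly more careful than the paper on the reverse inclusion, where you correctly restrict to $x^*\in\partial f(x)\cap\ell\mathbb{B}^*$ so that $(x,x^*)$ is an admissible pair in \eqref{EFunction}.
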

\begin{proof} By Corollary~\ref{Global}, for every $x\in\operatorname{dom}f$, $\partial f(x)\cap\ell\mathbb{B}^*\ne\emptyset$ and so $f(x)\leq\tilde{f}(x)$. Let
	$$
	S:=\{(y,y^*)\in\operatorname{dom}f\times X^*:y^*\in\partial f(y)\cap\ell\mathbb{B}^*\}.
	$$
Clearly, $S\ne\emptyset$ and $\tilde{f}$ is given by
$$
\tilde{f}(x)=\sup_{(y,y^*)\in S}\left\{\langle y^*,x-y\rangle+f(y)\right\}, \quad \forall x\in X.
$$	
Since $\tilde{f}$ is the pointwise supremum of linear functionals, $\tilde{f}$ is convex. 
Let $x,u$ be two points in $X$. For every $(y,y^*)\in S$, by the definition of subdifferential
\begin{equation}\label{eq1}
	\langle y^*,x-y\rangle+f(y)\leq f(x),
\end{equation}
and by  the inequality $\|y^*\|\leq\ell$, we have
\begin{equation}\label{eq2}
\begin{split}
	\langle y^*,x-y\rangle+f(y)&=\langle y^*,x-u\rangle+\langle y^*,u-y\rangle+f(y)\\
	&\leq\|y^*\|\|x-u\|+\langle y^*,u-y\rangle+f(y)\\
	&\leq\ell\|x-u\|+\langle y^*,u-y\rangle+f(y).
\end{split}
\end{equation}	
Taking the supremum to all $(y,y^*)\in S$ both sides of \eqref{eq1} and \eqref{eq2}, we get
\begin{equation}\label{eq3}
\tilde{f}(x)\leq f(x), \quad \tilde{f}(x)\leq\ell\|x-u\|+\tilde{f}(u).
\end{equation}
The first inequality in \eqref{eq3} implies that $f(x)=\tilde{f}(x)$ for every $x\in\operatorname{dom}f$.
Hence, the second inequality in \eqref{eq3} implies that $\operatorname{dom}\tilde{f}=X$ and $\tilde{f}$ is $\ell-$Lipschitz on $X$. By \eqref{inclusion}, to get \eqref{equality}, it suffices to show that 
$\partial f(x)\cap\ell\mathbb{B}^*\subset \partial \tilde{f}(x)$ for all $x\in\operatorname{dom}f$. Indeed, let $x\in\operatorname{dom}f$ and $x^*\in \partial f(x)$. Then, $f(x)=\tilde{f}(x)$ and by \eqref{EFunction}, for every $z\in X$, we have 
\begin{eqnarray*}
	\tilde{f}(z)&\geq& f(x)+\langle x^*,z-x\rangle\\
	&=&  \tilde{f}(x)+\langle x^*,z-x\rangle.
\end{eqnarray*}
and so $x^*\in\partial\tilde{f}(x)$. Hence, $\partial f(x)\subset \partial \tilde{f}(x)$.
$\hfill\Box$
\end{proof}

\medskip
When the domain of the function has a nonempty interior and the original function is lower semicontinuous, the extended function can be refined. 
\begin{Corollary} Let $\ell\geq 0$ and $f: X \to \overline{\mathbb{R}}$ be a proper l.s.c. convex and $\ell-$Lipschitz function on $\operatorname{dom}f$. If $\operatorname{dom}f$ has a non-empty interior, then the function $\tilde{f}:X\rightarrow\mathbb{R}$ given by \eqref{EFunction} can be expressed as
	\begin{equation}
	\tilde{f}(x)=
	\begin{cases}
	f(x) & \text{ if } x \in \operatorname{dom}f,\\
	\underset{\substack{y\in \operatorname{bd}(\operatorname{dom}f)\\ y^*\in \partial f(y)\cap\ell\mathbb{B}^*}}{\sup}\left\{\langle y^*,x-y\rangle+f(y)\right\} & \text{ otherwise.}
	\end{cases}
	\end{equation}
\end{Corollary}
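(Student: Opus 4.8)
The plan is to handle the two branches of the claimed formula separately. For $x\in\operatorname{dom}f$ there is nothing to do, because $\tilde f(x)=f(x)$ already by Theorem~\ref{NewExtension}. So the work is in the case $x\notin\operatorname{dom}f$: writing $g(x)$ for the supremum of $\langle y^*,x-y\rangle+f(y)$ taken only over $y\in\operatorname{bd}(\operatorname{dom}f)$ and $y^*\in\partial f(y)\cap\ell\mathbb{B}^*$, I must prove $\tilde f(x)=g(x)$.

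The first step is to record that $\operatorname{dom}f$ is closed. This is because $f$ is $\ell$-Lipschitz on $\operatorname{dom}f$: along any sequence $x_n\to z$ with $x_n\in\operatorname{dom}f$ the numbers $f(x_n)$ form a Cauchy sequence in $\mathbb{R}$, hence converge to some $L\in\mathbb{R}$, and lower semicontinuity of $f$ forces $f(z)\le L<+\infty$, i.e. $z\in\operatorname{dom}f$. Consequently $\operatorname{bd}(\operatorname{dom}f)\subset\operatorname{dom}f$, so the pairs $(y,y^*)$ entering $g(x)$ form a subset of those entering \eqref{EFunction}, and therefore $g(x)\le\tilde f(x)$ comes for free.

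For the reverse inequality I would fix $x\notin\operatorname{dom}f$ and an admissible pair $y\in\operatorname{dom}f$, $y^*\in\partial f(y)\cap\ell\mathbb{B}^*$, and try to bound $\langle y^*,x-y\rangle+f(y)$ by $g(x)$. If $y\in\operatorname{bd}(\operatorname{dom}f)$ this is immediate from the definition of $g$, so the main case is $y\in\operatorname{int}(\operatorname{dom}f)$. Here I slide along the segment $[y,x]$ and take its last point in $\operatorname{dom}f$: set $\bar t:=\sup\{t\in[0,1]:y+t(x-y)\in\operatorname{dom}f\}$ and $z:=y+\bar t(x-y)$. Using that $y$ is an interior point, that $\operatorname{dom}f$ is closed, and that $x\notin\operatorname{dom}f$, one checks $0<\bar t<1$, $z\in\operatorname{dom}f$, and $z\in\operatorname{bd}(\operatorname{dom}f)$; by Corollary~\ref{Global} there is $z^*\in\partial f(z)\cap\ell\mathbb{B}^*$, so $(z,z^*)$ is admissible for $g(x)$. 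Then monotonicity of $\partial f$ applied at $y$ and $z$, together with $z-y=\bar t(x-y)$ and $\bar t>0$, gives $\langle z^*,x-y\rangle\ge\langle y^*,x-y\rangle$, while $y^*\in\partial f(y)$ gives $f(z)-f(y)\ge\langle y^*,z-y\rangle=\bar t\langle y^*,x-y\rangle$. Substituting $x-z=(1-\bar t)(x-y)$ and using $1-\bar t>0$, these two inequalities add up to $\langle z^*,x-z\rangle+f(z)\ge\langle y^*,x-y\rangle+f(y)$; since the left-hand side is at most $g(x)$, we get $\langle y^*,x-y\rangle+f(y)\le g(x)$, and taking the supremum over all admissible $(y,y^*)$ yields $\tilde f(x)\le g(x)$.

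The only place that needs genuine care is the geometric claim about the segment: that its last point $z$ inside $\operatorname{dom}f$ is really a boundary point and that $\bar t$ is strictly between $0$ and $1$. This is precisely where the three hypotheses are used — closedness of $\operatorname{dom}f$ (which I derive from Lipschitz continuity plus lower semicontinuity), the nonempty interior of $\operatorname{dom}f$ (so that the case $y\in\operatorname{int}(\operatorname{dom}f)$ is the one to treat and $\bar t>0$), and $x\notin\operatorname{dom}f$ (so that $z\ne x$ and $\bar t<1$). Everything else is the one-line convex-combination identity above together with monotonicity of $\partial f$ and the subgradient inequality, so I do not expect any further difficulty.
$\hfill\Box$
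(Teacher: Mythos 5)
Your proposal is correct and follows essentially the same route as the paper's proof: establish that $\operatorname{dom}f$ is closed via Lipschitz continuity and lower semicontinuity, reduce to dominating each term coming from an interior point $y$ by a term at the boundary point where the segment $[y,x]$ exits $\operatorname{dom}f$, and conclude with the subgradient inequality plus monotonicity of $\partial f$. The only differences are cosmetic (your more careful justification that the exit parameter $\bar t$ lies in $(0,1)$ and that the exit point is a boundary point, which the paper asserts more briefly).
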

\begin{proof} Observe that $\operatorname{dom}f$ is closed. Indeed, suppose that $\{x_n\}\subset \operatorname{dom}f$ and $x_n\rightarrow x$.  By the Lipschitz continuity of $f$, for every $n\in\mathbb{N}$,
\begin{eqnarray*}
	f(x_n)&\leq& |f(x_n)-f(x_1)|+|f(x_1)|\\
	&\leq&\ell\|x_n-x_1\|+|f(x_1)|.
\end{eqnarray*}
By the lower semicontinuity of $f$, we have
$$
f(x)\leq\liminf_{n\rightarrow\infty}f(x_n)\leq \ell\|x-x_1\|+|f(x_1)|
$$
and so $x\in\operatorname{dom}f$. Hence, $\operatorname{dom}f$ is closed. 
	According to Theorem \ref{NewExtension}, it suffices show that for any $x \notin \operatorname{dom}f$ and $z \in \operatorname{dom}f, z^* \in \partial f(z)\cap \ell\mathbb{B}^*$, then
	\begin{equation}\label{e.1}
	\langle z^*,x-z\rangle+f(z) \le \underset{\substack{y\in \operatorname{bd}(\operatorname{dom}f)\\ y^*\in \partial f(y)\cap\ell\mathbb{B}^*}}{\sup}\left\{\langle y^*,x-y\rangle+f(y)\right\}.
	\end{equation}
	The inequality automatically holds if $z \in \operatorname{bd}(\operatorname{dom}f)$. Now we assume that $z \in \operatorname{int}(\operatorname{dom}f)$. Since $\operatorname{dom}f$ is closed,  there exists $\alpha \in (0,1)$ such that 
	$$
	y:= z+\alpha(x-z) \in \operatorname{bd}(\operatorname{dom}f)\subset\operatorname{dom}f.
	$$ By Corollary \ref{Global},  $\partial f(y)\cap \ell\mathbb{B}^* \ne \emptyset$. Pick $y^* \in \partial f(y) \cap \ell\mathbb{B}^*$. By the definition and the monotonicity of $\partial f$, we have
	\begin{equation*}
	\begin{split}
	f(y) +\langle y^*, x-y\rangle & \ge f(z) +\langle z^*, y-z \rangle + \langle y^*, x-y \rangle \\
	& \ge f(z) + \langle z^*, y-z \rangle +\frac{1-\alpha}{\alpha}\langle y^*,y-z \rangle \\
	& \ge f(z) + \langle z^*, y-z \rangle +\frac{1-\alpha}{\alpha}\langle z^*,y-z \rangle \\
	& = f(z) + \langle z^*, x-z \rangle,
	\end{split}
	\end{equation*}
	which implies that \eqref{e.1} is satisfied.
$\hfill\Box$
\end{proof}

\subsection{Moreau envelopes of the convex functions}
Let us recall the notion of Moreau envelope of a function and its properties. 
For any $\lambda>0$ the Moreau envenlope of index $\lambda$ of a function $f:X\rightarrow\overline{\mathbb{R}}$ is defined by
$$
	f_\lambda(x):=\inf\left\{f(y)+\frac{\|x-y\|^2}{2\lambda}:y\in X\right\}, \quad\forall x\in X.
$$
Suppose now that $f$ is a proper l.s.c. convex function.  In this setting, the envelopes of $f$ are also called Moreau-Yosida regularizations. 
By \cite[Proposition~1.10]{Brezis11}, $f$ is bounded below by an affine continuous functional, i.e., there exist $x^*\in X^*$ and $\alpha\in\mathbb{R}$ such that
\begin{equation}\label{LowerBounded}
f(y)\geq \langle x^*,y\rangle+\alpha, \quad \forall y\in X. 
\end{equation}
The envelopes of $f$ have some remarkable properties:
\begin{itemize}
	\item Since $f_\lambda$ is the infimum convolution of two convex functions $f$ and $y\mapsto(1/2\lambda)\|y\|^2$,  $f_\lambda$ is also convex by \cite[Theorem~2.1.3(ix)]{Zalinescu02}. 
	\item Clearly, by the properness of $f$ and \eqref{LowerBounded},  $f_\lambda$ has real values on $X$, or equivalently, $\operatorname{dom}f_\lambda=X$. Moreover, since $f_\lambda$ is bounded from above on a neighborhoood of every point in $\operatorname{dom}f$ and $\operatorname{int}(\operatorname{dom}f_\lambda)=X\ne\emptyset$, it is locally Lipschitz continuous on $X$ by \cite[Proposition~2.107]{BS00}.
	\item From \eqref{LowerBounded}, we can choose $x_0\in\operatorname{dom}f$ and $r>0$ such that 
	$$
	f(y)+r(\|y-x_0\|^2+1)\geq 0, \quad \forall y\in X.
	$$
	Therefore, by \cite[Theorem~2.64]{Attouch84}, for every $x\in X$, the sequence $\{f_\lambda(x)\}_{\lambda>0}$ increases to $f(x)$ as $\lambda$ decreases to zero and
	\begin{equation}\label{limit}
	f(x)=\lim_{\lambda\downarrow 0}f_\lambda(x)=\sup_{\lambda>0}f_\lambda(x).
	\end{equation}
	Moreover, if $f$ is $\ell-$Lipschitz on $\operatorname{dom}f$ then $\{f_\lambda\}$ uniformly converges to $f$ on $\operatorname{dom}f$. Indeed, for any $x,y\in\operatorname{dom}f$, we have
	\begin{eqnarray*}
		f(y)+\frac{1}{2\lambda}\|x-y\|^2&\geq& f(x)-\ell\|x-y\|+\frac{1}{2\lambda}\|x-y\|^2\\
		&=& f(x)+\frac{1}{2\lambda}\left(\|x-y\|-\lambda\ell\right)^2-\frac{\lambda\ell^2}{2}\\
		&\geq& f(x)-\frac{\lambda\ell^2}{2}. 
	\end{eqnarray*}
Taking the infimum in the variable $y$ both sides of the latter inequality, we get
$$
f(x)\geq f_\lambda(x)\geq f(x)-\frac{\lambda\ell^2}{2}.
$$
The latter inequalities show that $\{f_\lambda\}$ uniformly converges to $f$ on $\operatorname{dom}f$.
	\item Consider the function $g:X\rightarrow\mathbb{R}$ given by
	\begin{equation}\label{InFunction}
	g(y):=f(y)+\frac{\|x-y\|^2}{2\lambda}, \quad \forall y\in X.
	\end{equation}
	Since $f$ is a proper l.s.c. convex function, so is $g$. By \eqref{LowerBounded}, $g$  is coercive, i.e.,  
	$$
	\lim_{\|y\|\rightarrow+\infty}g(y)=+\infty.
	$$
	Hence, if $X$ is reflexive then $g$ attains its minimum on $X$ by \cite[Corollary~3.23]{Brezis11}.  This implies that, for every $x\in X$ and $\lambda>0$, there exists $x_\lambda\in\operatorname{dom}f$ such that
	\begin{equation}\label{Exact}
	f_\lambda(x)=\min_{y\in X}\left[f(y)+\frac{1}{2\lambda}\|x-y\|^2\right]=f(x_\lambda)+\frac{\|x-x_\lambda\|^2}{2\lambda}.
	\end{equation}
	Moreover, by \eqref{Exact}, applying the subdifferential rule for the infimum convolution \cite[Corollary~2.4.7]{Zalinescu02} of two convex functions $f$ and $y\mapsto(1/2\lambda)\|y\|^2$, we obtain
	\begin{equation}\label{SubMoreau}
	\partial f_\lambda(x)=\partial f(x_\lambda)\cap(1/\lambda)J(x-x_\lambda),
	\end{equation}
	where $J$ is the subdifferential of the convex function $x\mapsto(1/2)\|x\|^2$ given by
	\begin{equation}\label{DualityMapping}
	J(x)=\{x^*\in X^*:\langle x^*,x\rangle=\|x^*\|^2=\|x\|^2\}.
	\end{equation}
\end{itemize}
It follows from \eqref{limit} that if $f_\lambda$ is $\ell-$Lipschitz on $\operatorname{dom}f$  for all sufficiently small $\lambda>0$ then $f$ is also $\ell-$Lipschitz on its domain. Our aim is to prove the reverse result. 
We first establish the upper bound for the values of the Moreau envelope of $f$ at points of subdifferentiability. 
\begin{Lemma}\label{Bound}
	Let $f: X \to \overline{\mathbb{R}}$ be a proper l.s.c. convex function and $\lambda>0$. If $x \in \operatorname{dom}(\partial f)$ and $x^* \in \partial f(x)$ then 
	\begin{equation}\label{UpperBound}
 f_{\lambda}(x)\leq	f_{\lambda}(y)+\Vert x^*\Vert \left\Vert y-x\right\Vert,\quad  \forall y \in X.
	\end{equation}
\end{Lemma}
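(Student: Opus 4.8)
The plan is to prove \eqref{UpperBound} directly, by inserting a carefully chosen competitor into the infimum defining $f_\lambda(x)$. Put $\ell:=\|x^*\|$ and fix $y\in X$; since $f$ is proper, l.s.c.\ and convex it is minorised by a continuous affine functional, so $f_\lambda$ is real-valued on $X$ (as recalled above), and for every $\varepsilon>0$ one may pick $w\in\operatorname{dom}f$ with
$$
f(w)+\frac{1}{2\lambda}\|y-w\|^2\le f_\lambda(y)+\varepsilon .
$$
If $w=x$ then $f_\lambda(x)\le f(x)=f(w)\le f_\lambda(y)+\varepsilon$ and we are done after letting $\varepsilon\downarrow 0$; so assume $w\ne x$ and set $b:=\|x-w\|>0$ and $A:=\|x-y\|$.

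Next I would run the competitor along the segment $[x,w]$. For $t\in[0,1]$ the point $z_t:=(1-t)x+tw$ satisfies $\|x-z_t\|=tb$, and convexity gives $f(z_t)\le(1-t)f(x)+tf(w)$; combining this with the subgradient inequality in the form $f(x)-f(w)\le\langle x^*,x-w\rangle\le\ell b$ produces
$$
f_\lambda(x)\le f(z_t)+\frac{t^2b^2}{2\lambda}\le f(w)+(1-t)\ell b+\frac{t^2b^2}{2\lambda}.
$$
The idea is then to choose $t:=\min\{1,\lambda\ell/b\}\in[0,1]$ (equivalently, the minimiser over $[0,1]$ of the quadratic in the last two summands), so that those summands equal $\frac{b^2}{2\lambda}$ when $b\le\lambda\ell$ and $\ell b-\frac{\lambda\ell^2}{2}$ when $b>\lambda\ell$.

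The final and, I expect, only delicate point is the scalar estimate $(1-t)\ell b+\frac{t^2b^2}{2\lambda}\le\frac{(b-A)^2}{2\lambda}+\ell A$ for this $t$: when $b>\lambda\ell$ the difference of the two sides should reduce to the perfect square $\frac{1}{2\lambda}\big((b-A)-\lambda\ell\big)^2\ge 0$, and when $b\le\lambda\ell$ it should reduce to $A\big(\frac{A-2b}{2\lambda}+\ell\big)\ge 0$; getting the interpolation parameter right so that the algebra closes is the whole content of the argument, the rest being bookkeeping. Granting this, the triangle inequality $\|y-w\|\ge|b-A|$ yields $\frac{(b-A)^2}{2\lambda}+\ell A\le\frac{\|y-w\|^2}{2\lambda}+\ell\|x-y\|$, hence
$$
f_\lambda(x)\le f(w)+\frac{1}{2\lambda}\|y-w\|^2+\ell\|x-y\|\le f_\lambda(y)+\varepsilon+\ell\|x-y\|,
$$
and letting $\varepsilon\downarrow 0$ gives \eqref{UpperBound}. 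It is worth noting that this argument needs no reflexivity, since it works with $\varepsilon$-minimisers rather than an exact proximal point; when $X$ is reflexive one could instead take the exact minimiser $x_\lambda$ from \eqref{Exact}, observe via \eqref{SubMoreau} that some $p^*\in\partial f_\lambda(x)$ lies in $(1/\lambda)J(x-x_\lambda)\cap\partial f(x_\lambda)$, use monotonicity of $\partial f$ against $x^*\in\partial f(x)$ to get $\|p^*\|=(1/\lambda)\|x-x_\lambda\|\le\|x^*\|$, and conclude $f_\lambda(y)\ge f_\lambda(x)+\langle p^*,y-x\rangle\ge f_\lambda(x)-\|x^*\|\|y-x\|$.
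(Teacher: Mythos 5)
Your proof is correct, and it takes a genuinely different route from the paper's. The paper proves the lemma by taking an $\varepsilon^2$-minimizer of $g=f+\tfrac{1}{2\lambda}\|\cdot-x\|^2$, invoking the Br{\o}ndsted--Rockafellar theorem to produce a nearby point $y_\varepsilon$ with a small subgradient of $g$, splitting that subgradient via the sum rule (Theorem~\ref{SumRule}) into a piece in $\partial f(y_\varepsilon)$ and a piece in $(1/\lambda)J(y_\varepsilon-x)$, and then using monotonicity of $\partial f$ against $x^*$ to get the key bound $\|y_\varepsilon-x\|\le\lambda(\varepsilon+\|x^*\|)$, from which a chain of estimates yields \eqref{UpperBound}. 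You instead take an $\varepsilon$-minimizer $w$ for $f_\lambda(y)$, feed the competitor $z_t=(1-t)x+tw$ with the optimized $t=\min\{1,\lambda\ell/b\}$ into the infimum defining $f_\lambda(x)$, and reduce everything to a one-variable quadratic inequality; I checked your two algebraic identities (the perfect square $\tfrac{1}{2\lambda}((b-A)-\lambda\ell)^2$ for $b>\lambda\ell$ and the factorization $A\bigl(\tfrac{A-2b}{2\lambda}+\ell\bigr)$ for $b\le\lambda\ell$, the latter nonnegative precisely because $\ell\ge b/\lambda$) and both are correct, as is the triangle-inequality step $\|y-w\|\ge|b-A|$. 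What your approach buys is self-containedness: it uses only the definition of the infimal convolution, convexity along a segment, and the single subgradient inequality at $x$, with no Br{\o}ndsted--Rockafellar, no sum rule, and no maximal monotonicity; both arguments avoid reflexivity. In a final write-up you should display the scalar verification rather than asserting that ``the algebra closes,'' but the identities you quote are exactly the ones needed. Your closing remark for the reflexive case essentially reproduces the paper's own remark following Theorem~\ref{LipschitzMoreau} and is also correct.
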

\begin{proof} Suppose that $x \in \operatorname{dom}(\partial f)$ and $x^* \in \partial f(x)$.   Let $\varepsilon>0$ be arbitrary and $x_\varepsilon\in\operatorname{dom}f$ be such that 
	$$
	f(x_\varepsilon)+\frac{1}{2\lambda}\|x_\varepsilon-x\|^2\leq f(y)+\frac{1}{2\lambda}\|y-x\|^2+\varepsilon^2, \quad \forall y\in X.
	$$
	By Br{\o}ndsted-Rockafellar theorem \cite{BR65}, there exist $y_\varepsilon\in X$ and $y^*_\varepsilon\in X^*$ such that $\|y^*_\varepsilon\|\leq\varepsilon$ and $y^*_\varepsilon\in \partial g(y_\varepsilon)$ where $g:X\rightarrow\mathbb{R}$ given by  \eqref{InFunction}. By Theorem~\ref{SumRule}, $y^*_\varepsilon\in \partial f(y_\varepsilon)+(1/\lambda)J(y_\varepsilon-x)$.  Let $z^*_\varepsilon\in \partial f(y_\varepsilon)$ be such that $y^*_\varepsilon-z^*_\varepsilon\in (1/\lambda)J(y_\varepsilon-x)$. By \eqref{DualityMapping}, the monotonicity of $\partial f$ and the inequality $\|y^*_\varepsilon\|\leq\varepsilon$, we have
	\begin{eqnarray*}
		\|y_\varepsilon-x\|^2&=&\lambda\langle y^*_\varepsilon-z^*_\varepsilon,y_\varepsilon-x \rangle\\
		&=&\lambda\langle y^*_\varepsilon-x^*,y_\varepsilon-x \rangle-\lambda\langle x^*-z^*_\varepsilon,x-y_\varepsilon\rangle\\
		&\leq&\lambda\langle y^*_\varepsilon-x^*,y_\varepsilon-x \rangle\\
	&\leq&\lambda\|y^*_\varepsilon-x^*\|\|y_\varepsilon-x\|\\
		&\leq&\lambda(\|y^*_\varepsilon\|+\|x^*\|)\|y_\varepsilon-x\|\\
		&\leq&\lambda(\varepsilon+\|x^*\|)\|y_\varepsilon-x\|.
	\end{eqnarray*}
It follows that
\begin{equation}\label{Bounded}
\|y_\varepsilon-x\|\leq \lambda\varepsilon+\lambda\|x^*\|. 
\end{equation}
Let $y,z$ be arbitrary  in $X$. Then, by \eqref{DualityMapping} and \eqref{Bounded}, we have
\begin{eqnarray*}
\langle z^*_\varepsilon-y^*_\varepsilon, z-x\rangle&\leq&\|z^*_\varepsilon-y^*_\varepsilon\|\|z-x\|\\
&=&(1/\lambda)\|y_\varepsilon-x\|\|z-x\|\\
&\leq&(1/\lambda)\|y_\varepsilon-x\|\|z-y\|+(1/\lambda)\|y_\varepsilon-x\|\|y-x\|\\
&\leq&(1/\lambda)\|y_\varepsilon-x\|\|z-y\|+(\varepsilon+\|x^*\|)\|y-x\|,
\end{eqnarray*}
\begin{eqnarray*}
\langle y^*_\varepsilon, z-y_\varepsilon\rangle&\leq&\|y^*_\varepsilon\|\|z-y_\varepsilon\|\\
&\leq&\varepsilon(\|z-x\|+\|x-y_\varepsilon\|)\\
&\leq&\varepsilon(\|z-x\|+\lambda\varepsilon+\lambda\|x^*\|).	
\end{eqnarray*}
Since $z^*_\varepsilon\in\partial f(y_\varepsilon)$, using \eqref{Bounded} again and the latter inequalities, we get
\begin{eqnarray*}
	f(z)&\geq&f(y_\varepsilon)+\langle z^*_\varepsilon, z-y_\varepsilon\rangle\\
	&=&f(y_\varepsilon) +\langle z^*_\varepsilon-y^*_\varepsilon, x-y_\varepsilon\rangle+\langle z^*_\varepsilon-y^*_\varepsilon, z-x\rangle+\langle y^*_\varepsilon, z-y_\varepsilon\rangle\\
	&\geq&	f(y_\varepsilon) +\frac{1}{\lambda}\|x-y_\varepsilon\|^2-\frac{1}{\lambda}\|y_\varepsilon-x\|\|z-y\|-(\varepsilon+\|x^*\|)\|y-x\|-\varepsilon(\|z-x\|+\lambda\varepsilon+\lambda\|x^*\|)\\
	&=&\left[f(y_\varepsilon)+\frac{1}{2\lambda}\|x-y_\varepsilon\|^2\right]+\left[\frac{1}{2\lambda}\|x-y_\varepsilon\|^2-\frac{1}{\lambda}\|y_\varepsilon-x\|\|z-y\|\right]-\|x^*\|\|y-x\|\\
	&&-\varepsilon(\|y-x\|+\|z-x\|+\lambda\varepsilon+\lambda\|x^*\|).
\end{eqnarray*}
On the other hand, by the definition of Moreau envelope and the Cauchy inequality, we have  	
$$
f(y_\varepsilon)+\frac{1}{2\lambda}\|x-y_\varepsilon\|^2\geq f_\lambda(x),
$$
$$
\frac{1}{2\lambda}\|x-y_\varepsilon\|^2-\frac{1}{\lambda}\|y_\varepsilon-x\|\|z-y\|\geq -\frac{1}{2\lambda}\|y-z\|^2.
$$
Therefore,
$$
f(z)+\frac{1}{2\lambda}\|y-z\|^2\geq f_\lambda(x)-\|x^*\|\|y-x\|-\varepsilon(\|y-x\|+\|z-x\|+\lambda\varepsilon+\lambda\|x^*\|).
$$
Letting $\varepsilon\downarrow 0$ in the above inequality, we get
$$
f(z)+\frac{1}{2\lambda}\|y-z\|^2\geq f_\lambda(x)-\|x^*\|\|y-x\|.
$$
It follows that
$$
f_\lambda(y)=\inf_{z\in X}\left[f(z)+\frac{1}{2\lambda}\|y-z\|^2\right]\geq f_\lambda(x)-\|x^*\|\|y-x\|
$$
and so \eqref{UpperBound} is satisfied.
	$\hfill\Box$
\end{proof}
\begin{Theorem} \label{LipschitzMoreau}
	Let $f: X \to \overline{\mathbb{R}}$ be a proper l.s.c. convex function and $\lambda>0$. 
	If $f$ is $\ell-$Lipschitz on $\operatorname{dom}f$ then $f_\lambda$ is $\ell-$Lipschitz on $\operatorname{dom}f$.
\end{Theorem}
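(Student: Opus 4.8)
The plan is to combine the pointwise estimate of Lemma~\ref{Bound} with the subdifferential characterization of global Lipschitz continuity from Corollary~\ref{Global}. The key observation is that the bulk of the work has already been done in Lemma~\ref{Bound}; what remains is to feed it a subgradient of controlled norm at every point of $\operatorname{dom}f$ and then symmetrize.

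First I would record that, since $f$ is $\ell-$Lipschitz on $\operatorname{dom}f$, Corollary~\ref{Global} gives $\partial f(x)\cap\ell\mathbb{B}^*\ne\emptyset$ for every $x\in\operatorname{dom}f$; in particular each such $x$ belongs to $\operatorname{dom}(\partial f)$ and admits a subgradient $x^*\in\partial f(x)$ with $\|x^*\|\le\ell$. Fixing $x\in\operatorname{dom}f$ together with such an $x^*$, Lemma~\ref{Bound} yields
$$
f_\lambda(x)\le f_\lambda(y)+\|x^*\|\,\|y-x\|\le f_\lambda(y)+\ell\|y-x\|,\qquad\forall y\in X.
$$
Now I would restrict $y$ to $\operatorname{dom}f$: for any two points $x,y\in\operatorname{dom}f$ the displayed inequality gives $f_\lambda(x)-f_\lambda(y)\le\ell\|y-x\|$, and interchanging the roles of $x$ and $y$ (both lie in $\operatorname{dom}f$, so the same argument applies verbatim) gives $f_\lambda(y)-f_\lambda(x)\le\ell\|x-y\|$. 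Hence $|f_\lambda(x)-f_\lambda(y)|\le\ell\|x-y\|$ for all $x,y\in\operatorname{dom}f$. Since $\operatorname{dom}f_\lambda=X$, the function $f_\lambda$ is finite on $\operatorname{dom}f$, and therefore $f_\lambda$ is $\ell-$Lipschitz on $\operatorname{dom}f$.

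I do not expect any genuine obstacle beyond Lemma~\ref{Bound} itself: the only point that needs care is that the subgradient handed to Lemma~\ref{Bound} must have norm at most $\ell$ at \emph{every} point of $\operatorname{dom}f$, which is exactly the content of Corollary~\ref{Global}(c). Note in particular that one does not need the stronger inclusion $\partial f(x)\subset\ell\mathbb{B}^*$, which may fail at boundary points of $\operatorname{dom}f$; the existence of a single $\ell$-bounded selection of $\partial f$ at each point is enough.
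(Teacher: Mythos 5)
Your proposal is correct and follows essentially the same route as the paper: invoke Corollary~\ref{Global}(c) to produce an $\ell$-bounded subgradient at each point of $\operatorname{dom}f$, feed it into Lemma~\ref{Bound}, and symmetrize in $x$ and $y$. Your closing remark that only the bounded selection (and not the inclusion $\partial f(x)\subset\ell\mathbb{B}^*$) is needed is exactly the point the paper relies on as well.
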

\begin{proof} Let $x,y$ be arbitrary  in $\operatorname{dom}f$. Since $f$ is $\ell-$Lipschitz on $\operatorname{dom}f$, by Corollary~\ref{Global}, $\operatorname{dom}f=\operatorname{dom}\partial f$ and there exist $x^*\in \partial f(x), y^*\in\partial f(y)$ such that $\|x^*\|\leq\ell$ and $\|y^*\|\leq\ell$. Applying Lemma~\ref{Bound}, we get
	$$
	f_{\lambda}(x)\leq	f_{\lambda}(y)+\Vert x^*\Vert \left\Vert y-x\right\Vert\leq f_{\lambda}(y)+\ell\|y-x\|,
	$$
	$$
	f_{\lambda}(y)\leq	f_{\lambda}(x)+\Vert y^*\Vert \left\Vert x-y\right\Vert\leq 	f_{\lambda}(x)+\ell\|x-y\|.
	$$
	Therefore, 
	$$
	\|f_{\lambda}(x)-f_{\lambda}(y)\|\leq\ell\|x-y\|,
	$$
	and so $f_\lambda$ is $\ell-$Lipschitz on $\operatorname{dom}f$.
	$\hfill\Box$
\end{proof}
\begin{Remark} {\rm When $X$ is reflexive we can give a simple proof of Theorem~\ref{LipschitzMoreau} without using Lemma~\ref{Bound}. Indeed, let $\lambda>0$ and $x\in \operatorname{dom}f$. Since $X$ is reflexive, there exists $x_\lambda\in\operatorname{dom}f$ satisfying  \eqref{SubMoreau}.	
Moreover, since $f_\lambda$ is convex and locally Lipschitz on $X$, we have  $\partial f_\lambda(x)\ne\emptyset$.  Let $x^*_\lambda\in\partial f_\lambda(x)$. By \eqref{SubMoreau}, we have
$x^*_\lambda\in\partial f(x_\lambda)\cap(1/\lambda)J(x-x_\lambda)$. 
Since $f$ is $\ell-$Lipschitz on $\operatorname{dom}f$, by Corollary~\ref{normal}, $x^*_\lambda\in\partial f(x_\lambda)\subset N_\ell(x_\lambda;\operatorname{dom}f)$ and so 
\begin{equation}\label{I1}
\langle x^*_\lambda,x-x_\lambda\rangle\leq \ell\|x-x_\lambda\|. 
\end{equation}
On the other hand, $x^*_\lambda\in (1/\lambda)J(x-x_\lambda)$ implies that
\begin{equation}\label{I2}
\langle x^*_\lambda,x-x_\lambda\rangle=(1/\lambda)\|x-x_\lambda\|^2=\lambda\|x^*_\lambda\|^2.
\end{equation}
Combining \eqref{I1} and \eqref{I2}, we get
$$
\|x^*_\lambda\|=(1/\lambda)\|x-x_\lambda\|\leq \ell.
$$
 Hence, $\partial f_\lambda (x)\ne\emptyset$ and $\partial f_\lambda (x)\subset \ell\mathbb{B}^*$ for all $\lambda>0$ and 
$x\in \operatorname{dom}f$. By Lemma~\ref{CharacterizationSet}, $f_\lambda$ is $\ell-$Lipschitz on $\operatorname{dom}f$ for all $\lambda>0$.
		}
\end{Remark}
\section{Conclusions}
Characterizations of Lipschitz continuity of a proper l.s.c. convex function on a real Banach space are investigated in this paper. Our criteria are expressed in terms of the boundedness of some selection of the subdifferential operator and the intersections of the subdifferential operator  and the normal cone operator to  domain of the function in question. On an open and bounded (not necessarily convex) set, we also characterize the Lipschitz continuity of the given function via the boundedness a selection of the subdifferential operator on the boundary of the given set. Applications to two classical problems in convex analysis are given:  the extension of a Lipschitz and convex function to the whole space and the justification of Lipschitz continuity of its Moreau envelope functions.

\end{document}